\newtheoremstyle{proclaim}
  {3pt}        
  {3pt}        
  {\slshape}  
  {}        
  {\bfseries} 
  {.}       
  { }    
  {}        
\theoremstyle{proclaim}
\newtheorem{assumption}{Assumption}
\newtheorem{lemma}{Lemma}
\newtheorem{proposition}{Proposition}
\newtheorem{theorem}{Theorem}
\theoremstyle{definition}
\theoremstyle{remark}
\newcommand{\blank}{\kern.15em\cdot\kern.15em}
\newcommand{\transpose}{^{\kern.075em\intercal}}
\DeclareMathOperator{\Indicator}{\mathbbm{1}}
\newcommand{\awDistance}{d\kern-.12em l} 
\newcommand{\reals}{\mathbb{R}}
\newcommand{\ball}{\mathbb{B}}
\newcommand{\nats}{\mathbb{N}}
\newcommand{\tprod}[2]{\mathop{\textstyle{\prod_{#1}^{#2}}}}
\newcommand{\tsup}[1]{\mathop{\textstyle{\sup_{#1}}}}
\newcommand{\tto}{\rightrightarrows}
\newcommand{\weaklyto}{\Rightarrow}
\newcommand{\hypoto}{\overset{\mathrm{h}}{\to}}
\newcommand{\epito}{\overset{\mathrm{e}}{\to}}
\newcommand{\pointto}{\overset{\mathrm{p}}{\to}}
\DeclareMathOperator{\fcns}{fcns}
\DeclareMathOperator{\lscfcns}{lsc-fcns}
\DeclareMathAlphabet{\mathsl}{T1}{cmr}{m}{sl}
\newcommand{\drv}{\mathsl{d}}
\newcommand{\defeq}{\coloneq}
\DeclareMathOperator{\dist}{dist}
\DeclareMathOperator{\epi}{epi}
\DeclareMathOperator*{\outerlimit}{LimOut}
\DeclareMathOperator*{\minimize}{minimize}
\let\argmin\relax
\DeclareMathOperator*{\argmin}{argmin}
\let\liminf\relax
\DeclareMathOperator*{\liminf}{liminf}
\let\limsup\relax
\DeclareMathOperator*{\limsup}{limsup}
\newcommand{\subjectTo}{\text{subject to}}
\newcommand{\where}{\text{where}}
\newcommand{\rxi}{\bm{\xi}}
\newcommand{\Expt}{\mathbb{E}} 
\newcommand{\Prob}{\mathbb{P}} 
\newcommand{\pmProb}{\mathbbm{1}} 
\newcommand{\given}{\kern.1em\vert\kern.1em}
\newcommand{\givenn}{\kern-.15em\bigm\vert\kern-.15em}
\definecolor{myblue}{RGB}{0, 119, 180}
\newcommand{\stageC}{\varphi} 
\newcommand{\ExptCtgF}{V} 
\newcommand{\rp}{\bm{p}}
\newcommand{\ctr}{\mathrm{ctr}}
\renewcommand{\defeq}{=}
\newcommand{\raisemath}[2]{\raisebox{#1}{$#2$}}
\def\@maketitle{%
  \newpage
  \begin{center}%
  \let \footnote \thanks
    {\LARGE \@title \par}%
    \vskip 1.5em%
    {\large
      \lineskip .5em%
      \begin{tabular}[t]{c}%
        \@author
      \end{tabular}\par}%
  \end{center}%
  }
\title{\textbf{Epi-Consistent Approximation\\of Stochastic Dynamic Programs}}
\author{\textsl{Dominic S. T. Keehan} \\
{\small University of Auckland} \\
{\small \texttt{dkee331@aucklanduni.ac.nz}}\\
\and \textsl{Johannes O. Royset} \\
{\small University of Southern California}\\
{\small \texttt{royset@usc.edu}}}
\date{\small 8\textsuperscript{th} of July, 2025}
\begin{document}

\maketitle

\noindent \textbf{Abstract:} We study the consistency of stochastic dynamic programs under converging probability distributions and other approximations. Utilizing results on the epi-convergence of expectation functions with varying measures and integrands, and the Attouch--Wets distance, we show that appropriate equi-semicontinuity assumptions assure epi-consistency. A number of examples illustrate the approach. In particular, we permit both unbounded and simultaneously approximated stage-cost functions, and treat an example with approximated constraints.

\bigskip
\noindent \textbf{Keywords:} Stochastic optimal control, stochastic dynamic programming,\\epi-convergence, Attouch--Wets distance.

\section{Introduction}
Usually in stochastic optimization problems the true probability distributions of the random variables involved must be approximated. It is important then that the decisions obtained from solving these approximating problems are statistically consistent. In this paper we consider stochastic optimal control problems where the solutions of associated stochastic dynamic programming equations are approximated. These equations involve a dependence between an objective function and a \emph{varying} function. Compared to optimizing the expectation of a \emph{fixed} function --- where consistency follows from suitably uniform versions of the strong law of large numbers (SLLN) \parencite{king-SLLN-consistency, artstein-SLLN-consistency} --- establishing consistency in this setting is more challenging.

Given a closed state space $\mathcal{X}\subset\reals^n$ and outcome space $\Xi\subset\reals^m$, we consider discrete-stage stochastic optimal control problems of the form
\begin{alignat}{3}\label{problem:sum}
& \kern0.04em \minimize_{y^1,\ldots,\,y^T} && \quad \Expt_{\Prob^T}\Biggl[\,\sum_{t=1}^{T}\beta^{t-1}\stageC(x^t,x^{t+1},\rxi^t) \Biggr]
&&\quad \tag{SOC}\\
& \subjectTo &&
\quad x^{t+1} = y^{t}(x^t,\rxi^t) \in \mathcal{Y}(x^t,\rxi^t) &&\quad \forall t\in[T]\defeq\{1,\ldots,T\},\notag
\end{alignat}
for initial state $x^1 \in\mathcal{X}$. The horizon $T$ may be finite or infinite, and the stagewise-independent random vector $\rxi^t$ has distribution $\Prob$ supported on $\Xi$. Here $\beta \in (0,1)$ appears with a power and $\stageC:\mathcal{X}\times\mathcal{X}\times\Xi\to\reals$. The set-valued mapping ${\mathcal{Y}:\mathcal{X}\times\Xi\tto\mathcal{X}}$, assumed throughout to be nonempty and compact valued, defines admissible policies $\{y^1,\ldots,y^T: \mathcal{X}\times\Xi\to\mathcal{X}\}$. Naturally, the problem requires additional assumptions to be well defined. We focus on the associated stochastic dynamic programming equations and will return to the discussion of assumptions in the following sections.

Stochastic optimal control problems are faithfully expressed through the solution of stochastic dynamic programming equations when the principle of optimality holds \parencite{light-principle-of-optimality-2024}. Establishing the consistency of an approximation to stochastic dynamic programming equations then entails: (i) showing that a solution to the true equations exists and satisfies the principle of optimality, and (ii) showing that this solution is approached by the solutions of the approximating equations. Typically, (i) is addressed with boundedness assumptions (see, e.g., \parencite[Chapters~3 and 4]{bertsekas1996stochastic} or \parencite[Chapter~9]{stokey-lucas:recursive-methods}), but more generally with monotone convergence \parencite[Chapter~5]{bertsekas1996stochastic} or inf-compactness \parencite{average-cost-unbounded-mdps-feinberg, discounted-cost-unbounded-mdps-feinberg}. In this work we address (ii) and provide results which ensure that the solution functions of the approximating stochastic dynamic programming equations epi-converge to solution functions of the true stochastic dynamic programming equations, and, as a consequence, that the approximating control policies for (\ref{problem:sum}) are consistent. 

There are existing works on the consistent approximation of multistage stochastic optimization problems. For finite-horizon problems, \textcite{pennanen-epiconvergent-discretisations, pennanen2009epi} establishes consistency by placing conditions on the scenario trees of the approximating control problem and requiring lower compactness of the objective function over the decision space. These conditions can be quite restrictive; the results may not hold when the support of the underlying distribution has holes \parencite[\S~5.1]{pennanen2009epi}, and lower compactness does not hold when the objective function cannot be lower bounded by a linear function \parencite[Theorem~5]{lower-compactness-ioffe} which excludes some problems involving saddle functions. \textcite{shapiro-multistage-consistency} also considers a related problem involving consistency in terms of an iterated (rather than joint) limit in the number of random samples in each stage. From a practical perspective this is unnatural as it requires a discretization where the number of samples in future stages must increase at a faster rate than the number of samples in prior stages, which may be difficult to implement. There is also an existing work that includes a consistency result for infinite-horizon problems. \textcite{shapiro-periodic-consistency} utilize the contraction-mapping property of Bellman operators defined on a complete metric space of bounded functions under the sup-norm. This approach fails when the stage-cost functions are unbounded. We avoid all of these limiting assumptions.

In this work we provide theoretical justification for algorithms which solve approximations of stochastic dynamic programs --- we {do not} deal with the practicalities of actually solving these problems computationally. However, stochastic dual dynamic programming--type algorithms have proven useful for this purpose, especially when the number of stages is not too large \parencite{pereira1991multi}. Infinite-horizon problems may also be approached by additionally approximating the number of stages \parencite{korf2006approximating} or with specialized algorithms; see, e.g., \parencite{Nannicini-infinite-horizon-alg, shapiro-periodic-consistency, dowson-policy-graph}. Although it is well known that both the algorithmic and statistical rate of convergence in long-horizon problems can be very slow, algorithms may be improved with specific bounding functions \parencite{shapiro2023dual-bounds} and statistical errors follow central limit--like behavior for certain problem classes \parencite{shapiro-central-limit}. Therefore, it is clear at least some stochastic dynamic programming problems can be feasibly solved to an acceptable accuracy.

The layout of this paper is as follows. In Section~\ref{section:preliminaries} we detail mathematical preliminaries and recall the necessary background on epi-convergence. Section~\ref{section:finite-horizon-stochastic-dynamic-programming} considers finite-horizon stochastic dynamic programming, presents intermediate results on the epi-convergence of varying sums and expectation functions, and provides conditions under which the solution functions of approximating stochastic dynamic programming equations epi-converge to solution functions of the true equations. Section~\ref{section:infinite-horizon-stochastic-dynamic-programming} considers infinite-horizon stochastic dynamic programming and adapts the results of the previous section to this setting using the metric structure imposed by the Attouch--Wets distance. We conclude in Section~\ref{section:examples} by considering a number of examples, including problems with unbounded stage-cost functions and simultaneously approximated constraints.

\section{Preliminaries}\label{section:preliminaries}
For the measurable space $(\Xi,\mathfrak{B}(\Xi))$ where $\mathfrak{B}(\Xi)$ is the Borel $\sigma$-algebra on $\Xi$, we write ${\Expt_{\Prob}\bigl[f(\rxi)\bigr] \defeq \int_{\Xi}f(\xi)\,\drv\Prob(\xi)}$ for the expectation of a Borel-measurable function $f:\Xi\to\reals$ with respect to a Borel probability distribution $\Prob$ when this is well defined. (We use boldface to distinguish random vectors from their outcomes.) We denote the weak convergence of a sequence of distributions $\{\Prob_{\nu}\}_{ \nu\in\nats}$ on $(\Xi,\mathfrak{B}(\Xi))$ to a distribution $\Prob$ on $(\Xi,\mathfrak{B}(\Xi))$ by $\Prob_{\nu}\weaklyto\Prob$; see \parencite{convergence-of-probability-measures:Billingsley} and \parencite[Chapter~13]{probability-theory:Klenke} for details on weak convergence.

Let $\overline{\reals}\defeq\reals\cup\{-\infty,\infty\}$. For a closed subset $\mathcal{Z}\subset\reals^{r}$, denote by $\fcns(\mathcal{Z})$ the set of $\overline{\reals}$-valued functions on $\mathcal{Z}$ excluding $f = \infty$. A function $f\in\fcns(\mathcal{Z})$ is \emph{proper} if $f(z)>-\infty$ for all $z\in\mathcal{Z}$. A sequence of functions $\{f_{\nu}\in\fcns(\mathcal{Z})\}_{{\nu}\in\nats}$ \emph{epi-converges} to a function $f\in\fcns(\mathcal{Z})$, denoted $f_{\nu} \epito f$, if for each $z\in\mathcal{Z}$ it holds that
    \begin{alignat*}{2}
        \forall z_{\nu}\in\mathcal{Z} &\to z, &\quad \liminf f_{\nu}(z_{\nu}) &\geq f(z)\\
        \exists z_{\nu}\in\mathcal{Z} &\to z, &\quad \limsup f_{\nu}(z_{\nu}) &\leq f(z);
    \end{alignat*}
these are the \emph{liminf} and \emph{limsup} portions of epi-convergence, respectively. The \emph{outer limit} of a sequence of sets, denoted by $\outerlimit$, is the collection of cluster points to which subsequences of elements within the sets converge. If $f_{\nu} \epito f$, then $\outerlimit (\argmin f_{\nu}) \subset \argmin f$, provided that $f$ is proper. In addition, if ${\cup_{\nu\in\nats} \{z\in\mathcal{Z}:f_{\nu}(z) < \infty \}}$ is bounded, then $\inf f_{\nu} \to \inf f$ as well; see, e.g., \parencite[\S~4.E]{royset:an-optimization-primer} and a slight extension of \parencite[Theorem~5.5(b), (d)]{royset:an-optimization-primer}. 

A function $f\in\fcns(\mathcal{Z})$ is \emph{lower semicontinuous} (lsc) if $\liminf f(z_{\nu})\geq f({z})$ for every ${z_{\nu}\in\mathcal{Z}\to z}$; denote by $\lscfcns(\mathcal{Z})$ the subset of $\fcns(\mathcal{Z})$ consisting of lsc functions. 
For $\bar{z}\in\mathcal{Z}$ and $\delta \geq 0$, let ${\ball_{{\delta}}(\bar{z})} \defeq {\{z\in\mathcal{Z} : \lVert z-\bar{z} \rVert_2\leq \delta\}}$, this being the closed ball of radius $\delta$ centered at $\bar{z}$ in $\mathcal{Z}$. A sequence of functions ${\{f_{\nu}\in\lscfcns(\mathcal{Z})\}_{{\nu}\in\nats}}$ is \emph{equi-lsc at} $\bar{z}\in\mathcal{Z}$ if either $\limsup f_{\nu}(\bar{z}) \to -\infty$, or for each $\rho,\varepsilon\in(0,\infty)$ there exists $\bar{\nu}\in\nats$ and $\delta>0$ such that
\begin{equation*}
    \inf_{z\in\ball_{\delta}(\bar{z})} f_{\nu}(z) \geq \min\{f_{\nu}(\bar{z})-\varepsilon,\rho\} \quad \forall  {\nu} \geq {\bar{\nu}}.
\end{equation*}
We say that ${\{f_{\nu}\}_{{\nu}\in\nats}}$ is \emph{equi-lsc} if it is equi-lsc at every $\bar{z}\in\mathcal{Z}$.

A sequence of functions $\{f_{\nu}\in\fcns(\mathcal{Z})\}_{{\nu}\in\nats}$ \emph{converges pointwise} to a function $f\in\fcns(\mathcal{Z})$, denoted $f_{\nu} \pointto f$, if $f_{\nu}(z)\to f(z)$ for each $z\in\mathcal{Z}$. If $\{f_{\nu}\}_{{\nu}\in\nats}$ is equi-lsc, then $f_{\nu} \pointto f$ if and only if $f_{\nu} \epito f$ \parencite[Theorem~7.10]{rockafellar-wets-variational-analysis}. We also need the notions of hypo-convergence ($\raisemath{-2.75pt}{\hypoto}$), upper semicontinuity (usc), and equi-upper semicontinuity (equi-usc); these have parallel definitions to those of epi-convergence, lower semicontinuity, and equi-lower semicontinuity, respectively; see \parencite{rockafellar-wets-variational-analysis} for details.

\section{Finite-Horizon Stochastic Dynamic Programming}\label{section:finite-horizon-stochastic-dynamic-programming}
In this section we consider the finite-horizon version of the problem (\ref{problem:sum}). A route to a solution is provided by finding real-valued functions $\{\ExptCtgF^t:\mathcal{X}\to\reals;\, t\in [T]\}$ satisfying the stochastic dynamic programming equations
\begin{equation}\label{equation:finite-horizon-value-function}
\ExptCtgF^t(x)=\Expt_{\Prob} \biggl[\,\inf_{y\in\mathcal{Y}(x,\rxi)}\Bigl\{\stageC(x,y,\rxi)+\beta\ExptCtgF^{t+1}(y)\Bigr\}\biggr] \quad \forall x\in\mathcal{X}, \; \forall t \in [T],
\end{equation}
where $\ExptCtgF^{T+1} = 0$. Given such $\{\ExptCtgF^t;\, t\in [T]\}$, a function  $y^{t}:\mathcal{X}\times\Xi\to\mathcal{X}$ satisfying 
\begin{equation*}
y^{t}(x,\xi)\in\argmin_{y\in\mathcal{Y}(x,\xi)}\Bigl\{\stageC(x,y,\xi)+\beta\ExptCtgF^{t+1}(y)\Bigr\} \quad\forall(x,\xi)\in\mathcal{X}\times\Xi
\end{equation*}
with $\xi\mapsto y^{t}(x,\xi)$ measurable for every $x\in\mathcal{X}$ defines a decision rule at stage $t$. Repeating this for each stage defines an admissible policy for (\ref{problem:sum}). If ${\{V^t;\, t\in [T]\}}$ satisfies the principle of optimality, then such a policy solves (\ref{problem:sum}) and the value $V^{1}(x^1)$ is equal to its optimal objective value; see \parencite[Propositions~3.1 and 3.2]{bertsekas1996stochastic} and \parencite[Theorem~9.2]{stokey-lucas:recursive-methods} for sufficient conditions.

For a sequence of probability distributions $\{\Prob_{\nu}\}_{\nu\in\nats}$ on $(\Xi,\mathfrak{B}(\Xi))$ {approximating $\Prob$} and a sequence of functions $\{\stageC_{\nu}:\mathcal{X}\times\mathcal{X}\times\Xi\to\reals\}_{\nu\in\nats}$ {approximating $\varphi$}, solutions to (\ref{equation:finite-horizon-value-function}) can be approximated by finding a sequence of real-valued functions $\{V^t_{\nu}:\mathcal{X}\to\reals;\, t\in[T]\}_{\nu\in\nats}$ satisfying the stochastic dynamic programming equations
\begin{equation}\label{equation:finite-horizon-saa-value-function}
    \ExptCtgF_{\nu}^{t}(x)=\Expt_{\Prob_{\nu}}\biggl[\,\inf_{y\in\mathcal{Y}(x,\rxi)}\Bigl\{\stageC_{\nu}(x,y,\rxi)+\beta \ExptCtgF_{\nu}^{t+1}(y)\Bigr\}\biggr] \quad\forall x\in\mathcal{X},\; \forall t\in[T],
\end{equation}
where $V^{T+1}_{\nu} = 0$. Given such $\{V_{\nu}^t;\,  t\in[T]\}$, an approximating decision rule $y^{t}_{\nu}:\mathcal{X}\times\Xi\to\mathcal{X}$ at stage $t$ is defined via
\begin{equation*}
y_{\nu}^{t}(x,\xi)\in\argmin_{y\in\mathcal{Y}(x,\xi)}\Bigl\{\stageC_{\nu}(x,y,\xi)+\beta\ExptCtgF^{t+1}_{\nu}(y)\Bigr\} \quad\forall(x,\xi)\in\mathcal{X}\times\Xi
\end{equation*}
and this therefore defines an approximating policy. Approximating the set-valued mapping $\mathcal{Y}$ can also be treated in the present setting through its dependence on values in the outcome space; see Subsection~\ref{subsection:inventory-control-autoregressive-randomness}.

The equations in (\ref{equation:finite-horizon-value-function}) feature a \emph{Bellman operator} $B:\lscfcns(\mathcal{X})\to\fcns(\mathcal{X})$ which for \newline${f \in \lscfcns(\mathcal{X})}$ at $x\in\mathcal{X}$ has value
\begin{equation}
    B(f)(x) \defeq \Expt_{\Prob}\biggl[\,\inf_{y\in\mathcal{Y}(x,\rxi)}\Bigl\{\stageC(x,y,\rxi)+\beta f(y)\Bigr\}\biggr],
\label{equation:bellman-operator}
\end{equation}
and they may be restated as $V^t = B(V^{t+1})$. It will be helpful to also consider the function $b:\lscfcns(\mathcal{X})\to\fcns(\mathcal{X}\times\Xi)$ which for $f \in\lscfcns(\mathcal{X})$ at $(x,\xi)\in\mathcal{X}\times\Xi$ has value
\begin{equation*}
    b(f)(x,\xi) \defeq \inf_{y\in\mathcal{Y}(x,\xi)}\Bigl\{\stageC(x,y,\xi)+\beta f(y)\Bigr\}.
\end{equation*}
By definition, $B(f) = \Expt_{\Prob}\bigl[b(f)(\blank,\rxi)\bigr]$. We similarly define $\{B_{\nu}:\lscfcns(\mathcal{X})\to\fcns(\mathcal{X});\, \nu\in\nats\}$ and $\{b_{\nu}:\lscfcns(\mathcal{X})\to\fcns(\mathcal{X}\times\Xi);\, \nu\in\nats\}$ for the approximating equations in (\ref{equation:finite-horizon-saa-value-function}). 

To ensure that the solutions functions to (\ref{equation:finite-horizon-value-function}) and (\ref{equation:finite-horizon-saa-value-function}) are well defined, we make the following assumption.
\begin{assumption}\label{assumption:finite-horizon-regularity}
    There exists a set $\mathcal{V}\subset\lscfcns(\mathcal{X})$ with $0 \in \mathcal{V}$ satisfying the following condition:
     \begin{enumerate}
     \item[] For each  $V\in\mathcal{V}$ and $x\in\mathcal{X}$ the function $b(V)(x,\blank)$ is integrable with respect to $\Prob$, and the Bellman operator $B$ maps $\mathcal{V}$ into itself. 
     \end{enumerate}
     Moreover, for each $\nu\in\nats$ the set $\mathcal{V}$ satisfies the equivalent condition for $b_{\nu}$, $\Prob_{\nu}$, and $B_{\nu}$.
\end{assumption}
\noindent Under Assumption~\ref{assumption:finite-horizon-regularity} the function $V^{t} = B(V^{t+1}) = \Expt_{\Prob}\bigl[b(V^{t+1})(\blank,\rxi)\bigr] \in \mathcal{V}$ is real valued for each $V^{t+1}\in\mathcal{V}$. It follows via induction that the solution functions to (\ref{equation:finite-horizon-value-function}) are well defined. 
Similar observations apply to (\ref{equation:finite-horizon-saa-value-function}). Section~\ref{section:examples} furnishes examples where the assumption holds.

We seek a sequence of approximating real-valued solution functions $\{V_{\nu}^t;\,  t\in[T]\}_{\nu\in\nats}$ to (\ref{equation:finite-horizon-saa-value-function}), consistent in the sense that the inclusion
\begin{equation*}
    \outerlimit \biggl(\,\argmin_{y\in\mathcal{Y}(x,\xi)}\Bigl\{\stageC_{\nu}(x,y,\xi)+\beta{V_{\nu}^{t+1}}(y)\Bigr\}\!\biggr)
\subset \argmin_{y\in\mathcal{Y}(x,\xi)}\Bigl\{\stageC(x,y,\xi)+\beta\ExptCtgF^{t+1}(y)\Bigr\}  \quad \forall (x,\xi)\in\mathcal{X}\times\Xi
\end{equation*}
holds for some real-valued solution functions $\{V^t;\,  t\in[T]\}$ to (\ref{equation:finite-horizon-value-function}).  In other words, each sequence of decisions obtained from the approximating problems may only converge (if at all) to an optimal decision for the true stochastic dynamic programming equations. Whether or not solutions to (\ref{equation:finite-horizon-saa-value-function}) satisfy the principle of optimality relative to an underlying approximation of (\ref{problem:sum}) is not strictly required --- the problem (\ref{problem:sum}) need not even be well defined. In this regard, throughout the paper we only assume that $\mathcal{X}$ and $\Xi$ are closed, and that $\mathcal{Y}$ is nonempty and compact valued.

The equations in (\ref{equation:finite-horizon-saa-value-function}) involve a sum of two functions that both vary with $\nu$. To ensure that epi-convergence is preserved under this operation, we prove the following lemma. Its conclusion is standard, although its proof is slightly nonstandard in that it applies to functions with domains that are strict subsets of finite-dimensional Euclidean spaces.
\begin{lemma}\label{lemma:epi-converging-sums}
For functions $\{f,f_{\nu},g,g_{\nu}:\mathcal{X}\to{\reals};\, \nu\in\nats\}$, if $f_{\nu}\epito f$, $f_{\nu}\pointto f$, $g_{\nu}\epito g$, and $g_{\nu}\pointto g$, then
\begin{equation*}
\inf_{y\in\mathcal{Y}(x,\xi)}\bigl\{f_{\nu}(y)+\beta g_{\nu}(y)\bigr\} \to \inf_{y\in\mathcal{Y}(x,\xi)}\bigl\{f(y)+\beta g(y)\bigr\} \quad \forall (x,\xi)\in\mathcal{X}\times\Xi.
\end{equation*}
\end{lemma}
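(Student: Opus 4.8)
The plan is to reduce the statement to two facts about the summed sequence $\{f_{\nu}+\beta g_{\nu}\}$: that it epi-converges and converges pointwise to $f+\beta g$, and then to transfer this to convergence of the infima over the compact set $\mathcal{Y}(x,\xi\kern0.025em)$. Fix $(x,\xi\kern0.025em)\in\mathcal{X}\times\Xi$ and abbreviate $K\defeq\mathcal{Y}(x,\xi\kern0.025em)$, $u_{\nu}\defeq f_{\nu}+\beta g_{\nu}$, and $u\defeq f+\beta g$, recalling that $K$ is nonempty and compact and $\beta\in(0,1)$.

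First I would record the two one-sided properties of the sum. For the liminf part, take any $y_{\nu}\in\mathcal{X}\to y$; superadditivity of the liminf together with $\beta>0$ and the liminf portions of $f_{\nu}\epito f$ and $g_{\nu}\epito g$ gives $\liminf u_{\nu}(y_{\nu})\geq\liminf f_{\nu}(y_{\nu})+\beta\liminf g_{\nu}(y_{\nu})\geq f(y)+\beta g(y)=u(y)$. For a matching recovery sequence I would exploit the pointwise hypotheses: the constant sequence $y_{\nu}\equiv y$ satisfies $u_{\nu}(y)=f_{\nu}(y)+\beta g_{\nu}(y)\to u(y)$, so $\limsup u_{\nu}(y)\leq u(y)$. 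Hence $u_{\nu}\epito u$, and trivially $u_{\nu}\pointwiseto u$ as well.

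Next I would establish the convergence of the constrained infima in two bounds. For the upper bound, fix $\varepsilon>0$ and choose $\bar{y}\in K$ with $u(\bar{y})\leq\inf_{K}u+\varepsilon$; since $\bar{y}\in K$ and $u_{\nu}(\bar{y})\to u(\bar{y})$ by pointwise convergence, $\limsup\inf_{K}u_{\nu}\leq\limsup u_{\nu}(\bar{y})=u(\bar{y})\leq\inf_{K}u+\varepsilon$, and letting $\varepsilon\downarrow0$ yields $\limsup\inf_{K}u_{\nu}\leq\inf_{K}u$. For the lower bound, I would pass to a subsequence realising $\liminf\inf_{K}u_{\nu}$, pick near-minimisers $y_{\nu}\in K$ of $u_{\nu}$ over $K$, and use compactness of $K$ to extract a cluster point $y^{*}\in K$; the liminf inequality of the previous paragraph, applied along the convergent subsequence, then gives $\liminf\inf_{K}u_{\nu}\geq\liminf u_{\nu}(y_{\nu})\geq u(y^{*})\geq\inf_{K}u$. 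Combining the two bounds proves the claim.

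The one genuinely delicate point — and the reason both epi- and pointwise convergence are assumed — is the recovery (limsup) half: epi-convergence of the summands does not by itself pass to the sum, since the recovery sequences for $f_{\nu}$ and $g_{\nu}$ at a point $y$ need not coincide, and no single sequence need drive both up to their limits simultaneously. The pointwise hypotheses circumvent this by supplying $y_{\nu}\equiv y$ as a common recovery sequence. The nonstandard aspect flagged before the statement is that the argument runs intrinsically on the closed set $\mathcal{X}$ and its compact subset $K$ rather than on all of $\R^{n}$; this is harmless because constant sequences and cluster points of sequences in $K$ remain in $K$, so no boundary effects intrude. Finally, the near-minimiser extraction is legitimate: the liminf inequality together with compactness of $K$ forces $\inf_{K}u_{\nu}>-\infty$ for all large $\nu$ (a sequence in $K$ with values tending to $-\infty$ would have a cluster point violating that inequality), while $\inf_{K}u$ is finite because $u$, being a sum of epi-limits, is lsc and real valued on the compact set $K$.
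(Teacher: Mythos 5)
Your proof is correct, but it follows a genuinely different route from the paper's. Both arguments turn on the same two mechanisms --- superadditivity of $\liminf$ plus the liminf halves of $f_{\nu}\epito f$ and $g_{\nu}\epito g$ for the lower estimate, and constant sequences $y_{\nu}\equiv y$ supplied by the pointwise hypotheses as common recovery sequences for the upper estimate --- and you correctly identify the latter as the reason both modes of convergence are assumed. Where you diverge is in how the constrained infimum is handled: the paper extends $f_{\nu}+\beta g_{\nu}$ to an extended-real-valued function $\psi_{\nu}$ on all of $\R^{n}$, equal to $\infty$ off $\mathcal{Y}(x,\xi\kern 0.025em)$, proves full epi-convergence $\psi_{\nu}\epito\psi$, and then invokes a citable result (Theorem~5.5(d) of the optimization primer, applicable because $\cup_{\nu}\{\psi_{\nu}<\infty\}=\mathcal{Y}(x,\xi\kern 0.025em)$ is bounded) to conclude $\inf\psi_{\nu}\to\inf\psi$; you instead stay intrinsically on $\mathcal{X}$, never form extended-valued functions, and prove convergence of the infima by hand --- a near-minimiser for $u$ over $K$ for the upper bound, and near-minimisers of $u_{\nu}$ plus compactness of $K$ and cluster-point extraction for the lower bound. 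Your version is self-contained and more elementary; the paper's version buys the reusable intermediate fact $\psi_{\nu}\epito\psi$, which is exploited again in the proof of Theorem~\ref{theorem:finite-horizon-consistency} (via Theorem~5.5(b) of the same reference) to get the $\argmin$ outer-limit inclusion, something your streamlined argument does not directly deliver.

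One step you should make explicit: in your lower bound you apply the liminf inequality ``along the convergent subsequence'' $y_{\nu_{j}}\to y^{*}$, but what you proved quantifies over \emph{full} sequences indexed by all $\nu\in\N$. The transfer to subsequences is precisely the content of the paper's padded-index construction (pad the subsequence with the constant $y^{*}$ at the missing indices, note the padded sequence converges to $y^{*}$ in $\mathcal{X}$, and use that a liminf along a subsequence dominates the liminf of the full sequence). The fact is standard and the fix is one line, but it is the only genuinely technical step in either argument --- the paper spends the bulk of its liminf portion on exactly this device, which your write-up absorbs silently. The remaining delicate points (finiteness of $\inf_{K}u$ via lower semicontinuity of the epi-limits $f$ and $g$ on the compact set $K$, and ruling out $\inf_{K}u_{\nu}=-\infty$ for infinitely many $\nu$ by the same cluster-point argument) you handle correctly.
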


\begin{proof}
Fix $(x,\xi)\in\mathcal{X}\times\Xi$. For each $\nu\in\nats$ define $\psi_{\nu}:\reals^{n}\to\overline{\reals}$ by $\psi_{\nu}(y) \defeq f_{\nu}(y)+\beta g_{\nu}(y)$ if $y\in\mathcal{Y}(x,\xi)$ and by $\psi_{\nu}(y) \defeq \infty$ otherwise. Define $\psi:\reals^{n}\to\overline{\reals}$ similarly. Following \parencite[Proposition~4.19(a)]{royset:an-optimization-primer}, we first show $\psi_{\nu} \epito \psi$. For $y\in\reals^{n}$, if ${y_{\nu}\in\reals^{n}\to y}$, then either $\liminf \psi_{\nu}(y_{\nu})=\infty \geq \psi(y)$ and the liminf portion of epi-convergence is satisfied, or there exists a subsequence with $y_{\nu_{\kappa}}\in\mathcal{Y}(x,\xi)\to y$ such that $\liminf \psi_{\nu}(y_{\nu})= \liminf \psi_{\nu_{\kappa}}(y_{\nu_{\kappa}})$. In the second case we construct a padded version of the sequence of indices $\{\nu_{\kappa}\}_{\kappa\in\nats}$ as follows: for each $\kappa\in\nats$ set ${{\nu}_{\kappa_{\ell}}} = \min_{i\in\nats}\{\nu_i:\nu_i\geq \kappa\}$. Now, $\{(\nu_{\kappa},\nu_{\kappa})\}_{\kappa\in\nats}$ is a subsequence of $\{(\kappa,{\nu_{\kappa_{\ell}}})\}_{\kappa\in\nats}$, which implies that
\begin{equation*}
    \liminf f_{\nu_{\kappa}}(y_{\nu_{\kappa}}) \geq \liminf f_{{\kappa}}(y_{{\nu_{\kappa_{\ell}}}}) \quad \text{and} \quad \liminf\beta g_{\nu_{\kappa}}(y_{\nu_{\kappa}}) \geq \liminf\beta g_{k}(y_{{\nu_{\kappa_{\ell}}}}).
\end{equation*} 
Clearly, $y_{{\nu_{\kappa_{\ell}}}} \in\mathcal{Y}(x,\xi)\to y$ as well, and since $f_{\nu} \epito f$ and $g_{\nu} \epito g$, we have that
\begin{equation*}
\liminf \beta f_{k}(y_{{\nu_{\kappa_{\ell}}}}) \geq \beta f(y) \quad \text{and} \quad \liminf \beta g_{k}(y_{{\nu_{\kappa_{\ell}}}}) \geq \beta g(y).    
\end{equation*} 
Together these yield
\begin{align*}
    \liminf \psi_{\nu}(y_{\nu}) &= \liminf \bigl(f_{\nu_{\kappa}}(y_{\nu_{\kappa}})+\beta g_{\nu_{\kappa}}(y_{\nu_{\kappa}})\bigr)\\
    & \geq \liminf f_{\nu_{\kappa}}(y_{\nu_{\kappa}})+ \liminf\beta g_{\nu_{\kappa}}(y_{\nu_{\kappa}}) \\
    & \geq \liminf f_{{\kappa}}(y_{{\nu_{\kappa_{\ell}}}})+ \liminf\beta g_{k}(y_{{\nu_{\kappa_{\ell}}}}) \\
    & \geq f(y)+ \beta g(y) \\
    &= \psi(y).
\end{align*}
The limsup portion of epi-convergence follows for each $y\in\reals^{n}$ when taking the sequence with $y_{\nu} = y$ for each $\nu\in\nats$; either $y\in\mathcal{Y}(x,\xi)$ and
\begin{align*}
    \limsup \psi_{\nu}(y_{\nu}) &= \limsup \bigl(f_{\nu}(y)+ \beta g_{\nu}(y)\bigr)\\
    & \leq \limsup f_{\nu}(y)+ \limsup \beta g_{\nu}(y) \\
    &= f(y)+ \beta g(y) \\
    &= \psi(y)
\end{align*}
where the second equality holds since $f_{\nu} \pointto f$ and $g_{\nu} \pointto g$, or $y\notin\mathcal{Y}(x,\xi)$ and 
\begin{equation*}
\limsup \psi_{\nu}(y_{\nu}) = \limsup \psi_{\nu}(y) = \infty = \psi(y).
\end{equation*}
We now establish the claimed convergence. The function $\psi$ is proper since it takes finite values on the nonempty set $\mathcal{Y}(x,\xi)$ and $\infty$ elsewhere. Moreover, ${\{y\in\reals^n : \psi_{\nu}(y) < \infty \}} = \mathcal{Y}(x,\xi)$ for each $\nu\in\nats$, and so ${\cup_{\nu\in\nats} \{y\in\reals^n : \psi_{\nu}(y) < \infty \}} = \mathcal{Y}(x,\xi)$. As assumed throughout, this set is compact and therefore bounded. Thus, \parencite[Theorem~5.5(d)]{royset:an-optimization-primer} applies to $\{\psi,\psi_{\nu};\, \nu\in\nats\}$ and
\begin{equation*}
	\inf_{y\in\mathcal{Y}(x,\xi)} \bigl\{f_{\nu}(y)+\beta g_{\nu}(y)\bigr\} = \inf \psi_{\nu} \to \inf \psi =  \inf_{y\in\mathcal{Y}(x,\xi)} \bigl\{f(y)+\beta g(y) \bigr\},
\end{equation*}
which proves the result.
\end{proof}

The solutions to the equations in (\ref{equation:finite-horizon-saa-value-function}) are expectation functions with a probability distribution and an integrand that vary with $\nu$. We use the following result of \textcite{feinberg-epiconvergence-varying-integrands} to establish epi-convergence for functions of this type. In our setting $\mathcal{X}\subset\reals^n$ and $\Xi\subset\reals^m$ are the closed state and outcome spaces. 

Let $\Indicator$ be the event indicator; that is $\Indicator \{\mathcal{E}\}=1$ if the event $\mathcal{E}$ is true and $\Indicator \{\mathcal{E}\}=0$ if it is false.
\begin{proposition}[{\parencite*{feinberg-epiconvergence-varying-integrands}, Theorem~3.7}]\label{proposition:epi-convergence-under-varying-measures-and-integrands}
For probability distributions $\{\Prob,\Prob_{\nu};\, \nu\in\nats\}$ on $(\Xi,\mathfrak{B}(\Xi))$ and functions $\{f,f_{\nu}:\mathcal{X}\times\Xi\to\overline{\reals};\, \nu\in\nats\}$, suppose that $\Prob_{\nu}\weaklyto\Prob$ and the following conditions are satisfied for each $\bar{x}\in\mathcal{X}$:
\begin{enumerate}[label={\normalfont(\roman*)}, ref={\roman*}]
    \item\label{proposition:epi-convergence-under-varying-measures-and-integrands-i} The functions $\{f(\bar{x},\blank),f_{\nu}(\bar{x},\blank);\,  \nu\in\nats\}$ are measurable.

\item\label{proposition:epi-convergence-under-varying-measures-and-integrands-ii} One has
    \begin{equation}\label{proposition:epi-convergence-under-varying-measures-and-integrands:equation:assymptotic-inf-integrable}
        \liminf_{\tau \to -\infty} \liminf_{(\nu,x) \to (\infty,\bar{x})} \Expt_{\Prob_{\nu}} \Bigl[f_{\nu}(x,\rxi)\Indicator\bigl\{ f_{\nu}(x,\rxi) \leq \tau \bigr\} \Bigr] = 0
        \end{equation}
        and for $\Prob$-almost every $\bar{\xi}\in\Xi$ it holds that
        \begin{equation}\label{proposition:epi-convergence-under-varying-measures-and-integrands:equation:liminf}
        \liminf_{(\nu,x,\xi)\to(\infty,\bar{x},\bar{\xi})} f_{\nu}(x,\xi) \geq f(\bar{x},\bar{\xi}).
        \end{equation}

    \item\label{proposition:epi-convergence-under-varying-measures-and-integrands-iii} There exists a sequence $x_{\nu}\in\mathcal{X}\to \bar{x}$ such that
    \begin{equation}\label{proposition:epi-convergence-under-varying-measures-and-integrands:equation:assymptotic-sup-integrable}
        \limsup_{\tau \to \infty} \limsup_{\nu \to \infty} \Expt_{\Prob_{\nu}} \Bigl[f_{\nu}(x_{\nu},\rxi)\Indicator\bigl\{  f_{\nu}(x_{\nu},\rxi) \geq \tau \bigr\} \Bigr] = 0
        \end{equation}
        and for $\Prob$-almost every $\bar{\xi}\in\Xi$ it holds that
        \begin{equation}\label{proposition:epi-convergence-under-varying-measures-and-integrands:equation:limsup}
            \limsup_{(\nu,\xi)\to(\infty,\bar{\xi})} f_{\nu}(x_{\nu},\xi) \leq f(\bar{x},\bar{\xi}).
        \end{equation}

\end{enumerate}
Then 
\begin{equation*}
    \Expt_{\Prob_{\nu}}\bigl[f_{\nu}(\blank,\rxi)\bigr] 
\epito \Expt_{\Prob}\bigl[ f(\blank,\rxi)\bigr].
\end{equation*}
\end{proposition}
We are now ready to prove our first main result.
\begin{theorem}\label{theorem:finite-horizon-consistency}
     Let Assumption~\ref{assumption:finite-horizon-regularity} hold and $\Prob_{\nu}\weaklyto\Prob$, $\varphi_{\nu}\epito\varphi$, and $\varphi_{\nu}\pointto\varphi$. Suppose that for each $t\in[T]$ the sequence of real-valued solution functions $\{\ExptCtgF_{\nu}^{t+1}\}_{\nu\in\nats}$ to (\ref{equation:finite-horizon-saa-value-function}) is equi-lsc and satisfies the following conditions for each $\bar{x}\in\mathcal{X}$:
     \begin{enumerate}[label={\normalfont(\roman*)}, ref={\roman*}]
    \item\label{theorem:finite-horizon-consistency-i} One has
     \begin{equation}\label{theorem:finite-horizon-consistency:equation:assymptotic-inf-integrable}
        \liminf_{\tau \to -\infty}
        \liminf_{(\nu,x)\to(\infty,\bar{x})} \Expt_{\Prob_{\nu}} \Bigl[b_{\nu}(\ExptCtgF_{\nu}^{t+1})(x,\rxi)\Indicator\bigl\{  b_{\nu}(\ExptCtgF^{t+1}_{\nu})(x,\rxi) \leq \tau \bigr\} \Bigr] = 0
     \end{equation}
     and that the sequence $\{b_{\nu}(\ExptCtgF_{\nu}^{t+1})\}_{\nu\in\nats}$ of lsc functions is equi-lsc.
     \item\label{theorem:finite-horizon-consistency-ii} One has
     \begin{equation}\label{theorem:finite-horizon-consistency:equation:assymptotic-sup-integrable}
        \limsup_{\tau \to \infty}
        \limsup_{\nu\to\infty} \Expt_{\Prob_{\nu}} \Bigl[b_{\nu}(\ExptCtgF_{\nu}^{t+1})(\bar{x},\rxi)\Indicator\bigl\{  b_{\nu}(\ExptCtgF^{t+1}_{\nu})(\bar{x},\rxi) \geq \tau \bigr\} \Bigr] = 0
     \end{equation}
     and that the sequence $\{b_{\nu}(\ExptCtgF_{\nu}^{t+1})(\bar{x},\blank)\}_{\nu\in\nats}$ of usc functions is equi-usc.
     \end{enumerate}
     Then, for each $t\in[T]$ it holds that $\ExptCtgF_{\nu}^{t+1} \epito \ExptCtgF^{t+1}$ and
\begin{multline*}
    \outerlimit \biggl(\,\argmin_{y\in\mathcal{Y}(x,\xi)}\Bigl\{\stageC_{\nu}(x,y,\xi)+\beta{V_{\nu}^{t+1}}(y)\Bigr\}\!\biggr)
\subset \argmin_{y\in\mathcal{Y}(x,\xi)}\Bigl\{\stageC(x,y,\xi)+\beta\ExptCtgF^{t+1}(y)\Bigr\} \neq \varnothing\\ \forall (x,\xi)\in\mathcal{X}\times\Xi.
\end{multline*}
\end{theorem}

\begin{proof}
    We proceed by induction in $t$ on the claim $\ExptCtgF_{\nu}^{t+1} \epito \ExptCtgF^{t+1}$. We have $\ExptCtgF_{\nu}^{T+1} = 0  \epito 0 = \ExptCtgF^{T+1}$ which is the base case. Assume $\ExptCtgF_{\nu}^{t+1} \epito \ExptCtgF^{t+1}$ for some $t\in[T]$. 
    With $\{\ExptCtgF_{\nu}^{t+1}\}_{\nu\in\nats}$ equi-lsc, it follows that $\ExptCtgF_{\nu}^{t+1} \pointto \ExptCtgF^{t+1}$ as well \parencite[Theorem~7.10]{rockafellar-wets-variational-analysis}. Since $\varphi_{\nu}\epito\varphi$ and $\varphi_{\nu}\pointto\varphi$, for every $(x,\xi)\in\mathcal{X}\times\Xi$ it holds that $\varphi_{\nu}(x,\blank,\xi) \epito \varphi(x,\blank,\xi)$ and $\varphi_{\nu}(x,\blank,\xi) \pointto \varphi(x,\blank,\xi)$, so Lemma~\ref{lemma:epi-converging-sums} applies to
\begin{align*}
    b_{\nu}(\ExptCtgF^{t+1}_{\nu})(x,\xi) &= \inf_{y\in\mathcal{Y}(x,\xi)}\Bigl\{\stageC_{\nu}(x,y,\xi)+\beta \ExptCtgF^{t+1}_{\nu}(y)\Bigr\}\\
    &\to \inf_{y\in\mathcal{Y}(x,\xi)}\Bigl\{\stageC(x,y,\xi)+\beta \ExptCtgF^{t+1}(y)\Bigr\} = b(\ExptCtgF^{t+1})(x,\xi),
\end{align*}
and we deduce that $b_{\nu}(\ExptCtgF^{t+1}_{\nu}) \pointto b(\ExptCtgF^{t+1})$. By (\ref{theorem:finite-horizon-consistency-i}) in the statement of the theorem, the sequence $\{b_{\nu}(V_{\nu}^{t+1})\}_{\nu\in\nats}$ is equi-lsc and it follows that $b_{\nu}(\ExptCtgF^{t+1}_{\nu}) \epito b(\ExptCtgF^{t+1})$ \parencite[Theorem~7.10]{rockafellar-wets-variational-analysis}. To conclude that ${\ExptCtgF_{\nu}^{t} \epito \ExptCtgF^{t}}$, we apply Proposition~\ref{proposition:epi-convergence-under-varying-measures-and-integrands} to the distributions $\{\Prob,\Prob_{\nu};\, \nu\in\nats\}$ and functions $\{b(V^{t+1}),b_{\nu}(V^{t+1}_{\nu});\, \nu\in\nats\}$, checking the conditions (\ref{proposition:epi-convergence-under-varying-measures-and-integrands-i})--(\ref{proposition:epi-convergence-under-varying-measures-and-integrands-iii}) as follows. 

(\ref{proposition:epi-convergence-under-varying-measures-and-integrands-i}): Satisfied due to the integrability (and therefore measurability) provided by Assumption~\ref{assumption:finite-horizon-regularity}. (\ref{proposition:epi-convergence-under-varying-measures-and-integrands-ii}): The asymptotic inf-integrability statement (\ref{proposition:epi-convergence-under-varying-measures-and-integrands:equation:assymptotic-inf-integrable}) is satisfied due to (\ref{theorem:finite-horizon-consistency-i}) in the statement of the theorem, and the liminf statement (\ref{proposition:epi-convergence-under-varying-measures-and-integrands:equation:liminf}) is satisfied since $b_{\nu}(V_{\nu}^{t+1}) \epito b(V^{t+1})$ implies that for every $(\bar{x},\bar{\xi})\in\mathcal{X}\times\Xi$,
\begin{gather*}
\forall (x_{\nu},\xi_{\nu}) \to (\bar{x},\bar{\xi}) \quad \liminf b_{\nu}(V_{\nu}^{t+1})(x_{\nu},\xi_{\nu}) \geq b(V^{t+1})(\bar{x},\bar{\xi})\\
\iff \liminf_{(\nu,x,\xi)\to(\infty,\bar{x},\bar{\xi})} b_{\nu}(V_{\nu}^{t+1})(x,\xi) \geq b(V^{t+1})(\bar{x},\bar{\xi}).
\end{gather*} 
(\ref{proposition:epi-convergence-under-varying-measures-and-integrands-iii}): For each $\bar{x}\in\mathcal{X}$, take the sequence with $x_{\nu} = \bar{x}$ for each $\nu\in\nats$. Due to (\ref{theorem:finite-horizon-consistency-ii}) in the statement of the theorem, the asymptotic sup-integrability statement (\ref{proposition:epi-convergence-under-varying-measures-and-integrands:equation:assymptotic-sup-integrable}) is satisfied. With ${b_{\nu}(V_{\nu}^{t+1}) \pointto b(V^{t+1})}$, we  have that $b_{\nu}(V_{\nu}^{t+1})(\bar{x},\blank) \pointto b(V^{t+1})(\bar{x},\blank)$, and again by (\ref{theorem:finite-horizon-consistency-ii}), the sequence $\{b_{\nu}(\ExptCtgF_{\nu}^{t+1})(\bar{x},\blank)\}_{\nu\in\nats}$ is equi-usc. Whence, $b_{\nu}(V_{\nu}^{t+1})(\bar{x},\blank) \overset{\text{h}}{\to} b(V^{t+1})(\bar{x},\blank)$ \parencite[Theorem~7.10]{rockafellar-wets-variational-analysis}. This implies that for every $\bar{\xi}\in\Xi$,
\begin{gather*}
\forall \xi_{\nu} \to \bar{\xi} \quad \limsup b_{\nu}(V_{\nu}^{t+1})(\bar{x},\xi_{\nu}) \leq b(V^{t+1})(\bar{x},\bar{\xi})\\
\iff \limsup_{(\nu,\xi)\to(\infty,\bar{\xi})} b_{\nu}(V_{\nu}^{t+1})(\bar{x},\xi) \leq b(V^{t+1})(\bar{x},\bar{\xi}),
\end{gather*}
which is the limsup statement (\ref{proposition:epi-convergence-under-varying-measures-and-integrands:equation:limsup}). Hence, Proposition~\ref{proposition:epi-convergence-under-varying-measures-and-integrands} applies, and we conclude that
\begin{equation*}
    \ExptCtgF_{\nu}^{t} = B_{\nu}(V_{\nu}^{t+1}) = \Expt_{\Prob_{\nu}}\bigl[b_{\nu}(V_{\nu}^{t+1})(\blank,\rxi)\bigr] \epito \Expt_{\Prob}\bigl[b(V^{t+1})(\blank,\rxi)\bigr] = B(V^{t+1}) =  \ExptCtgF^{t}.
\end{equation*}
 Reasoning similar to that in the proof of Lemma~\ref{lemma:epi-converging-sums} as well as \parencite[Theorem~5.5(b)]{royset:an-optimization-primer} then shows that the inclusion
\begin{multline*}
    \outerlimit \biggl(\,\argmin_{y\in\mathcal{Y}(x,\xi)}\Bigl\{\stageC_{\nu}(x,y,\xi)+\beta{V_{\nu}^{t+1}}(y)\Bigr\}\!\biggr)
\subset \argmin_{y\in\mathcal{Y}(x,\xi)}\Bigl\{\stageC(x,y,\xi)+\beta\ExptCtgF^{t+1}(y)\Bigr\} \neq \varnothing\\ \forall (x,\xi)\in\mathcal{X}\times\Xi
\end{multline*}
holds. To see that this $\argmin$ is nonempty, observe that $\varphi(x,\blank,\xi) + \beta V^{t+1}(\blank)$ is lsc and real valued, and that $\mathcal{Y}(x,\xi)$ is nonempty and compact by assumption. This proves the result.
\end{proof}

The statements (\ref{theorem:finite-horizon-consistency:equation:assymptotic-inf-integrable}) and (\ref{theorem:finite-horizon-consistency:equation:assymptotic-sup-integrable}) in the conditions~(\ref{theorem:finite-horizon-consistency-i}) and (\ref{theorem:finite-horizon-consistency-ii}) of Theorem~\ref{theorem:finite-horizon-consistency} are asymptotic semi-integrability statements that hold, for instance, under asymptotic local boundedness. Let $\rxi_1,\rxi_2,\ldots$ be random vectors which are independent and identically distributed according to $\Prob$ and suppose that $\{\xi_1,\xi_2,\ldots\}$ is an outcome of this random sequence.  We write $\pmProb_{\xi}$ for the point-mass probability distribution that assigns probability $1$ to $\xi\in\Xi$, and set $\Prob_{\nu} = \frac{1}{\nu}\sum_{i=1}^{\nu}\pmProb_{{\xi}_i}$ for each $\nu\in\nats$. Note that $\Prob_{\nu} \weaklyto \Prob$ for almost every outcome $\{{\xi}_1,{\xi}_2,\ldots\}$; see, e.g., \parencite[Problem~3.1]{convergence-of-probability-measures:Billingsley}. For a given $\bar{x}\in\mathcal{X}$, assume that there exists an integrable function $f:\Xi\to\reals$ and a constant $\delta > 0$ such that $f(\xi) \leq b_{\nu}(V_{\nu}^{t+1})(x,\xi)$ holds for all $(x,\xi) \in (\ball_{\delta}(\bar{x})\cap\mathcal{X})\times\Xi$ and $\nu\in\nats$ sufficiently large. Then
\begin{align*}
0 &\geq \liminf_{\tau \to -\infty} \liminf_{(\nu,x) \to (\infty,\bar{x})} \Expt_{\Prob_{\nu}} \Bigl[b_{\nu}(V_{\nu}^{t+1})(x,\rxi)\Indicator\bigl\{  b_{\nu}(V_{\nu}^{t+1})(x,\rxi) \leq \tau \bigr\} \Bigr] \\
 &\geq \liminf_{\tau \to -\infty} \liminf_{\nu \to \infty} \Expt_{\Prob_{\nu}} \Bigl[f(\rxi)\Indicator\bigl\{ f(\rxi) \leq \tau \bigr\} \Bigr]\\
 &= \liminf_{\tau \to -\infty}\Expt_{\Prob} \Bigl[f(\rxi)\Indicator\bigl\{  f(\rxi) \leq \tau \bigr\} \Bigr]\\
  &=  0,
\end{align*}
where the first equality follows for almost every outcome $\{{\xi}_1,{\xi}_2,\ldots\}$ due to the SLLN, and the second equality holds since $\Expt_{\Prob}\bigl[f(\rxi)\bigr]$ is finite. Moreover, the equi-semicontinuity conditions in Theorem~\ref{theorem:finite-horizon-consistency} can often be shown to hold using arguments involving convexity and concavity; see, e.g., \parencite{royset-wets-constrained-m-estimators}. In particular, it is sufficient for the functions to be locally Lipschitz continuous with common moduli. More examples follow in Section~\ref{section:examples}.

\section{Infinite-Horizon Stochastic Dynamic Programming}\label{section:infinite-horizon-stochastic-dynamic-programming}
In this section we consider the infinite-horizon version of the problem (\ref{problem:sum}). A route to a solution is provided by finding a real-valued function ${\ExptCtgF:\mathcal{X}\to\reals}$ satisfying the stochastic dynamic programming equation
\begin{equation}\label{equation:infinite-horizon-value-function}
     \ExptCtgF(x)=\Expt_{\Prob} \biggl[ \,\inf_{y\in\mathcal{Y}(x,\rxi)}\Bigl\{\stageC(x,y,\rxi)+\beta \ExptCtgF(y)\Bigr\}\biggr] \quad\forall x\in\mathcal{X}. 
\end{equation}
Solutions to (\ref{equation:infinite-horizon-value-function}) can be approximated by finding a real-valued function $V_{\nu}:\mathcal{X}\to\reals$ satisfying the approximating stochastic dynamic programming equation
\begin{equation}\label{equation:infinite-horizon-saa-value-function}
    \ExptCtgF_{\nu}(x)=\Expt_{\Prob_{\nu}}\biggl[\,\inf_{y\in\mathcal{Y}(x,\rxi)}\Bigl\{\stageC_{\nu}(x,y,\rxi)+\beta \ExptCtgF_{\nu}(y)\Bigr\}\biggr] \quad\forall x\in\mathcal{X}.
\end{equation}

In contrast to the finite-horizon stochastic dynamic programming equations, (\ref{equation:infinite-horizon-value-function}) and (\ref{equation:infinite-horizon-saa-value-function}) are functional fixed-point equations in their associated Bellman operators; they may be restated as $\ExptCtgF=B(\ExptCtgF)$ and $\ExptCtgF_{\nu}=B_{\nu}(\ExptCtgF_{\nu})$, respectively. For a set $\mathcal{V}\subset\lscfcns(\mathcal{X})$, even if $B(V)$ is a well-defined function for each $V\in\mathcal{V}$, the operator $B$ may not have a fixed point in $\mathcal{V}$. For now, given a set $\mathcal{V}\subset\lscfcns(\mathcal{X})$, we make the following integrability assumption.
\begin{assumption}\label{assumption:infinite-horizon-regularity}
    The set $\mathcal{V}\subset\lscfcns(\mathcal{X})$ satisfies the following condition:
     \begin{enumerate}
     \item[] For each $V\in\mathcal{V}$ and $x\in\mathcal{X}$ the function $b(V)(x,\blank)$ is integrable with respect to the probability distribution $\Prob$, and the Bellman operator $B$ maps $\mathcal{V}$ into itself.
     \end{enumerate}
    Moreover, for each $\nu\in\nats$ the set $\mathcal{V}$ satisfies the equivalent condition for $b_{\nu}$, $\Prob_{\nu}$, and $B_{\nu}$.
\end{assumption}
\noindent Assumption~\ref{assumption:infinite-horizon-regularity} differs from Assumption~\ref{assumption:finite-horizon-regularity} in that the existence of such a set $\mathcal{V}$ is not presupposed. For results on fixed-point problems it is natural to delegate verifying the existence of fixed points to the user of said results, as the analysis often requires utilizing context-dependent properties of the associated fixed-point operators. Additionally, the inclusion $0 \in \mathcal{V}$ is not present as the terminal condition $V^{T+1} = 0$ is no longer relevant in the infinite-horizon setting.

To study the asymptotic behavior of sequences of solutions to functional fixed-point problems, we metrize epi-convergence. Recall that a function $f\in\lscfcns(\mathcal{X})$ has \emph{epigraph} $\epi f \defeq {\{(x,\tau)\in\mathcal{X}\times\reals : f(x)\leq \tau\}}$. For a point $\bar{z}\in\mathcal{X}\times\reals$ and a set $\mathcal{Z}\subset\mathcal{X}\times\reals$, let $\dist(\bar{z},\mathcal{Z})\defeq \inf_{z\in\mathcal{Z}}\lVert z-\bar{z}\rVert_2 $, this being the point-to-set distance. Also, let $z_{\ctr} \in\mathcal{X}\times\reals$. The \emph{Attouch--Wets distance} \parencite{AttouchWets1986, AttouchWets1991, AttouchLucchettiWets1991} between two functions $f, g \in \lscfcns(\mathcal{X})$ is
\begin{equation*}
    \awDistance(f,g) \defeq \int_{0}^{\infty}\max_{z\in\ball_{\rho}(z_{\ctr})}\bigl\lvert\dist(z, \epi f)-\dist(z, \epi g) \bigr\rvert \exp({-\rho}) \,\drv\rho
\end{equation*}
and this defines a metric on $\lscfcns(\mathcal{X})$; for $\{f, f_{\nu};\, \nu\in\nats\}\subset{\lscfcns(\mathcal{X})}$, it holds that $f_{\nu} \epito f$ if and only if $\awDistance(f_{\nu},f) \to 0$. Moreover, under $\awDistance$, closed and bounded subsets of $\lscfcns(\mathcal{X})$ are compact \parencite[Proposition~4.45, Theorem~7.58]{rockafellar-wets-variational-analysis}. Closed subsets include those with elements that are convex, concave, pointwise bounded, or locally Lipschitz-continuous functions; see, e.g., \parencite[\S~4]{royset-wets-constrained-m-estimators}. The boundedness of subsets can be established by appealing to the inequality $\awDistance(f,g) \leq 1+\max\{\dist(z_{\ctr},\epi f), \dist(z_{\ctr},\epi g)\}$ \parencite[Proposition~3.1]{royset-approximations-2018}. For instance, if $z_{\ctr} \in \epi f\cap \epi g$, then $\max\{\dist(z_{\ctr},\epi f), \dist(z_{\ctr},\epi g)\}=0$ and $\awDistance(f,g) \leq 1$, so it suffices to show that all of the epigraphs share a common point. The choice of $z_{\ctr}$ alters the numerical value of $\awDistance$, but not the resulting topology.

For fixed-point functions $\{V=B(V), V_{\nu}=B_{\nu}(V_{\nu});\:  \nu\in\nats\}$, it holds that $V_{\nu}\epito V$ if and only if $B_{\nu}(V_{\nu})\epito B(V)$. This motivates the following continuous epi-convergence result in which we utilize the metric properties of $\awDistance$. Note that the expression $V=B(V)$ is understood to mean that $\awDistance(V,B(V))=0$. Since $\awDistance$ is a metric over $\lscfcns(\mathcal{X})$, with $V, B(V) \in \lscfcns(\mathcal{X})$ the functions $V$ and $B(V)$ agree pointwise which is concordant with the definition (\ref{equation:infinite-horizon-value-function}).
\begin{lemma}\label{lemma:continuous-convergence-of-fixed-point-operators}
Let Assumption~\ref{assumption:infinite-horizon-regularity} hold and suppose that $\mathcal{V}\subset\lscfcns(\mathcal{X})$ is a closed set under $\awDistance$. If $B_{\nu}(V_{\nu})\epito B(V)$ whenever ${V_{\nu}\in\mathcal{V}\epito V}$, then  
\begin{equation*}
	\outerlimit\{V\in\mathcal{V}:V=B_{\nu}(V)\} \subset \{V\in\mathcal{V}:V=B(V)\}.
\end{equation*}
In addition, if $\mathcal{V}$ is compact under $\awDistance$ and each $B_{\nu}$ has a fixed point $V_{\nu} = B_{\nu}(V_{\nu})$, then every cluster point of $\{V_{\nu}\}_{\nu \in \nats}$ is contained in $\{V\in\mathcal{V}:V=B(V)\}$ and this set is nonempty.
\end{lemma}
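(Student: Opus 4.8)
The plan is to prove the first inclusion directly by a subsequence argument that exploits the metric structure of $\awDistance$, and then to obtain the second assertion as a short corollary using sequential compactness. To begin the inclusion, I would take an arbitrary $V^\star \in \outerlimit\{V\in\mathcal{V}:V=B_\nu(V)\}$. By the definition of the outer limit there is a subsequence $\{V_{\nu_\kappa}\}_{\kappa\in\N}$ of fixed points, meaning $V_{\nu_\kappa}\in\mathcal{V}$ and $V_{\nu_\kappa}=B_{\nu_\kappa}(V_{\nu_\kappa})$, with $V_{\nu_\kappa}\epito V^\star$, equivalently $\awDistance(V_{\nu_\kappa},V^\star)\to 0$. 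Since $\mathcal{V}$ is closed under $\awDistance$ and each $V_{\nu_\kappa}\in\mathcal{V}$, the limit satisfies $V^\star\in\mathcal{V}$; in particular $V^\star\in\lscfcns(\mathcal{X})$, and by Assumption~\ref{assumption:infinite-horizon-regularity} the function $B(V^\star)\in\mathcal{V}\subset\lscfcns(\mathcal{X})$ is well defined.

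The crux is to upgrade this subsequence into a full sequence so that the continuous-convergence hypothesis --- phrased for sequences indexed by the full $\nu$ --- can be applied. To this end I would construct a padded sequence $\{W_\nu\}_{\nu\in\N}\subset\mathcal{V}$ by setting $W_\nu \defeq V_{\nu_\kappa}$ when $\nu=\nu_\kappa$ lies in the subsequence and $W_\nu\defeq V^\star$ otherwise; this is legitimate precisely because we have already shown $V^\star\in\mathcal{V}$. A short check of $\awDistance(W_\nu,V^\star)$, which equals $0$ off the subsequence and tends to $0$ along it, shows $W_\nu\epito V^\star$. The hypothesis then yields $B_\nu(W_\nu)\epito B(V^\star)$, and since $\awDistance$-convergence of the full sequence forces convergence of every subsequence, restricting to $\nu=\nu_\kappa$ gives $B_{\nu_\kappa}(W_{\nu_\kappa})\epito B(V^\star)$. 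But $B_{\nu_\kappa}(W_{\nu_\kappa})=B_{\nu_\kappa}(V_{\nu_\kappa})=V_{\nu_\kappa}$ by the fixed-point property, so $V_{\nu_\kappa}\epito B(V^\star)$ as well. Because $\awDistance$ is a genuine metric on $\lscfcns(\mathcal{X})$ and $\{V_{\nu_\kappa}\}_{\kappa\in\N}$ converges to both $V^\star$ and $B(V^\star)$, uniqueness of limits forces $\awDistance(V^\star,B(V^\star))=0$, i.e.\ $V^\star=B(V^\star)$. This places $V^\star$ in $\{V\in\mathcal{V}:V=B(V)\}$ and establishes the claimed inclusion.

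For the second statement I would argue as follows. Under compactness of $\mathcal{V}$ the sequence of fixed points $\{V_\nu\}_{\nu\in\N}$ lies in a sequentially compact set, so it admits at least one cluster point, and every cluster point is the $\awDistance$-limit of some convergent subsequence $V_{\nu_\kappa}\to V^\star$. Each such cluster point therefore lies in $\outerlimit\{V\in\mathcal{V}:V=B_\nu(V)\}$, whence the inclusion just proved places it in $\{V\in\mathcal{V}:V=B(V)\}$; the existence of a cluster point simultaneously shows this fixed-point set is nonempty.

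I expect the main obstacle to be the passage from the subsequence to a full sequence. The continuity hypothesis is stated along the full index $\nu$, yet the only information available --- the identity $V_{\nu_\kappa}=B_{\nu_\kappa}(V_{\nu_\kappa})$ --- lives on the subsequence, so one must pad carefully to ensure both that the padded sequence remains in $\mathcal{V}$ (which is why $V^\star\in\mathcal{V}$ must be secured via closedness first) and that it epi-converges to $V^\star$. The remainder is a clean appeal to the metric structure of $\awDistance$: it is exactly the uniqueness of limits that converts the two competing limits of $\{V_{\nu_\kappa}\}_{\kappa\in\N}$ into the desired fixed-point equation, which is the reason epi-convergence is metrized in the first place.
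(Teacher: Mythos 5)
Your proof is correct and follows essentially the same route as the paper's: pad the subsequence of fixed points into a full sequence in $\mathcal{V}$, invoke the continuous-convergence hypothesis to get $B_{\nu}(W_{\nu})\epito B(V^\star)$, and use uniqueness of limits under the metric $\awDistance$ to force $V^\star=B(V^\star)$. The only (immaterial) difference is the padding device --- you fill the off-subsequence slots with the limit $V^\star$, which is why you need closedness of $\mathcal{V}$ up front, whereas the paper repeats subsequence elements via ${\nu}_{\kappa_{\ell}} \defeq \min_{i\in\N}\{\nu_i : \nu_i\geq \kappa\}$; both paddings are valid, and your variant usefully makes explicit where closedness of $\mathcal{V}$ enters.
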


\begin{proof}
We view $\mathcal{V}\subset\lscfcns(\mathcal{X})$ as a metric space under $\awDistance$, and this metrizes epi-convergence. Let $V_{{\nu}_{\kappa}} \epito \bar{V}\in\mathcal{V}$ be a convergent subsequence of elements within the sequence of sets $\{\{V\in\mathcal{V}:V=B_{\nu}(V)\}\}_{{\nu}\in\nats}$. If no such subsequence exists, then the claim is vacuously true. We construct a padded version of the sequence of indices $\{\nu_{\kappa}\}_{\kappa\in\nats}$ as follows: for each $\kappa\in\nats$, set ${\nu}_{\kappa_{\ell}} = \min_{i\in\nats}\{\nu_i : \nu_i\geq \kappa\}$. Clearly, $V_{{\nu}_{\kappa_{\ell}}} \in \mathcal{V} \epito \bar{V}$, and it follows that $B_{\kappa}(V_{{\nu}_{\kappa_{\ell}}}) \epito B(\bar{V})$. Observe that $\{(\nu_{\kappa},\nu_{\kappa})\}_{\kappa\in\nats}$ is a subsequence of $\{(\kappa,{\nu_{\kappa_{\ell}}})\}_{\kappa\in\nats}$. But this means that $B_{{\nu}_{\kappa}}(V_{{\nu}_{\kappa}}) = V_{{{\nu}_{\kappa}}} \epito \bar{V}$ has the same limit as $B_{\kappa}(V_{{\nu}_{\kappa_{\ell}}}) \epito B(\bar{V})$. Thus, $\bar{V} = B(\bar{V})$, and the cluster point $\bar{V}$ is a fixed point of $B$. Since the subsequence $\{V_{{\nu}_{\kappa}}\}_{\nu\in\nats}$ was arbitrary, we have shown that 
\begin{equation*}
	\outerlimit\{V\in\mathcal{V}:V=B_{\nu}(V)\} \subset \{V\in\mathcal{V}:V=B(V)\}.
\end{equation*}
If in addition $\mathcal{V}$ is compact under $\awDistance$ and each $B_{\nu}$ has a fixed point $V_{\nu} = B_{\nu}(V_{\nu})$, then a convergent subsequence $V_{{\nu}_{\kappa}} \epito \bar{V}\in\mathcal{V}$ of $\{V_{\nu}\}_{{\nu}\in\nats}$ exists. Identical reasoning to that above then shows that the cluster point $\bar{V}$ is a fixed point of $B$. 
\end{proof}

We are now ready to prove our second main result.
\begin{theorem}\label{theorem:infinite-horizon-consistency}
Let Assumption~\ref{assumption:infinite-horizon-regularity} hold and $\Prob_{\nu}\weaklyto\Prob$, $\varphi_{\nu}\epito\varphi$, and $\varphi_{\nu}\pointto\varphi$. Suppose that $\mathcal{V}\subset\lscfcns(\mathcal{X})$ is a closed set under $\awDistance$. Further, suppose that every epi-convergent sequence $\{V_{\nu}\in\mathcal{V}\}_{\nu\in\nats}$ is equi-lsc and satisfies the following conditions for each $\bar{x}\in\mathcal{X}$:
     \begin{enumerate}[label={\normalfont(\roman*)}, ref={\roman*}]
    \item\label{theorem:infinite-horizon-consistency-i} One has
     \begin{equation}\label{theorem:infinite-horizon-consistency:equation:assymptotic-inf-integrable}
        \liminf_{\tau \to -\infty}
        \liminf_{(\nu,x)\to(\infty,\bar{x})} \Expt_{\Prob_{\nu}} \Bigl[b_{\nu}(\ExptCtgF_{\nu})(x,\rxi)\Indicator\bigl\{  b_{\nu}(\ExptCtgF_{\nu})(x,\rxi) \leq \tau \bigr\} \Bigr] = 0
     \end{equation}
     and that the sequence $\{b_{\nu}(\ExptCtgF_{\nu})\}_{\nu\in\nats}$ of lsc functions is equi-lsc.
     \item\label{theorem:infinite-horizon-consistency-ii}  One has
     \begin{equation}\label{theorem:infinite-horizon-consistency:equation:assymptotic-sup-integrable}
        \limsup_{\tau \to \infty}\,
        \limsup_{\nu\to\infty} \Expt_{\Prob_{\nu}} \Bigl[b_{\nu}(\ExptCtgF_{\nu})(\bar{x},\rxi)\Indicator\bigl\{  b_{\nu}(\ExptCtgF_{\nu})(\bar{x},\rxi) \geq \tau \bigr\} \Bigr] = 0
     \end{equation}
     and that the sequence $\{b_{\nu}(\ExptCtgF_{\nu})(\bar{x},\blank)\}_{\nu\in\nats}$ of usc functions is equi-usc.
     \end{enumerate}
Then,
\begin{equation*}
\outerlimit\{V\in\mathcal{V}:V=B_{\nu}(V)\} \subset \{V\in\mathcal{V}:V=B(V)\}.
\end{equation*}
In addition, if $\mathcal{V}$ is compact under $\awDistance$ and each $B_{\nu}$ has a fixed point $V_{\nu}=B_{\nu}(V_{\nu})$, then every cluster point of $\{V_{\nu}\}_{\nu \in \nats}$ is contained in $\{V\in\mathcal{V}:V=B(V)\}$ and this set is nonempty.
\end{theorem}

\begin{proof}
Let $V_{\nu}\in\mathcal{V}\epito V$. Following the proof of Theorem~\ref{theorem:finite-horizon-consistency}, we deduce that $b_{\nu}(\ExptCtgF_{\nu}) \pointto b(\ExptCtgF)$ and $b_{\nu}(\ExptCtgF_{\nu}) \epito b(\ExptCtgF)$, and together with (\ref{theorem:infinite-horizon-consistency-i}) and (\ref{theorem:infinite-horizon-consistency-ii}), that Proposition~\ref{proposition:epi-convergence-under-varying-measures-and-integrands} applies to $\{\Prob,\Prob_{\nu};\, \nu\in\nats\}$ and $\{b(V),b_{\nu}(V_{\nu});\, \nu\in\nats\}$. Thus,
\begin{equation*}
    B_{\nu}(V_{\nu}) = \Expt_{\Prob_{\nu}}\big[b_{\nu}(V_{\nu})(\blank,\rxi)\big] \epito \Expt_{\Prob}\big[b(V)(\blank,\rxi)\big] = B(V).
\end{equation*}
Hence, Lemma~\ref{lemma:continuous-convergence-of-fixed-point-operators} applies, and this proves the result.
\end{proof}

While Theorem~\ref{theorem:infinite-horizon-consistency} can be used to conclude that the cluster points of sequences of solution functions to the approximating fixed-point problems (\ref{equation:infinite-horizon-saa-value-function}) solve the true fixed-point problem (\ref{equation:infinite-horizon-value-function}), making statements about sequences of approximating decision rules obtained from the approximating solution functions is more delicate. We summarize the possibilities in the following proposition.
\begin{proposition}\label{proposition:consequences-of-non-uniqueness}
Let the conditions in the statement of Theorem~\ref{theorem:infinite-horizon-consistency} hold, including $\mathcal{V}$ being compact under $\awDistance$ and each $B_{\nu}$ having a fixed point $V_{\nu}=B_{\nu}(V_{\nu})$. Then the following are true:
    \begin{enumerate}[label={\normalfont(\roman*)}, ref={\roman*}]
    \item\label{proposition:consequences-of-non-uniqueness-i} If $V_{\nu_{\kappa}} \epito V$ is an epi-convergent subsequence of $\{V_{\nu}=B_{\nu}(V_{\nu})\}_{\nu\in\nats}$, then $V=B(V)$ and
    \begin{multline*} 
    \outerlimit \biggl(\,\argmin_{y\in\mathcal{Y}(x,\xi)}\Bigl\{\stageC_{\nu_{\kappa}}(x,y,\xi)+\beta V_{\nu_{\kappa}}(y)\Bigr\}\!\biggr) \subset \argmin_{y\in\mathcal{Y}(x,\xi)}\Bigl\{\stageC(x,y,\xi)+\beta V(y)\Bigr\}\neq\varnothing\\
    \forall(x,\xi)\in\mathcal{X}\times\Xi.
    \end{multline*} 
    \item\label{proposition:consequences-of-non-uniqueness-ii} If the fixed point $V=B(V)$ is unique, then
    \begin{multline*} 
    \outerlimit \biggl(\,\argmin_{y\in\mathcal{Y}(x,\xi)}\Bigl\{\stageC_{\nu}(x,y,\xi)+\beta V_{\nu}(y)\Bigr\}\!\biggr)\subset\argmin_{y\in\mathcal{Y}(x,\xi)}\Bigl\{\stageC(x,y,\xi)+\beta V(y)\Bigr\}\neq\varnothing\\
    \forall(x,\xi)\in\mathcal{X}\times\Xi.
     \end{multline*}
\end{enumerate}
\end{proposition}

\begin{proof}
    (\ref{proposition:consequences-of-non-uniqueness-i}): If $V_{\nu_{\kappa}} \epito V$, from Theorem~\ref{theorem:infinite-horizon-consistency} we deduce that the cluster point $V$ must satisfy $V=B(V)$. With $\{V_{\nu_{\kappa}}\}_{\kappa\in\nats}$ equi-lsc, it follows that $V_{\nu_{\kappa}} \overset{p}{\to} V$ as well \parencite[Theorem~7.10]{rockafellar-wets-variational-analysis}. Since $\varphi_{\nu}\epito\varphi$ and $\varphi_{\nu}\pointto\varphi$, for every $(x,\xi)\in\mathcal{X}\times\Xi$ it holds that $\varphi_{\nu}(x,\blank,\xi) \epito \varphi(x,\blank,\xi)$ and $\varphi_{\nu}(x,\blank,\xi) \pointto \varphi(x,\blank,\xi)$. Reasoning similar to that in the proof of Lemma~\ref{lemma:epi-converging-sums} as well as \parencite[Theorem~5.5(b)]{royset:an-optimization-primer} then shows that the claimed inclusion holds. To see that the $\argmin$ is nonempty, observe that $\varphi(x,\blank,\xi) + \beta V(\blank)$ is lsc and real valued, and that $\mathcal{Y}(x,\xi)$ is nonempty and compact (as assumed throughout).
    
    (\ref{proposition:consequences-of-non-uniqueness-ii}): If $V=B(V)$ is unique, then $V_{\nu_{\kappa}} \epito V$ for each epi-convergent subsequence of $\{V_{\nu}\}_{\nu\in\nats}$. Viewing  $\mathcal{V}\subset\lscfcns(\mathcal{X})$ as a compact metric space under $\awDistance$, it follows that $V_{\nu} \epito V$. Similar reasoning as above then shows that the claimed inclusion  and nonemptyness holds.
\end{proof}

\section{Examples}\label{section:examples}
In this section we demonstrate how the results developed in this paper can be applied. While we only consider the consistent approximation of infinite-horizon stochastic optimal control problems via Theorem~\ref{theorem:infinite-horizon-consistency}, similar arguments hold for finite-horizon problems and Theorem~\ref{theorem:finite-horizon-consistency}. In Subsection~\ref{subsection:inventory-control-example} we approximate a one-dimensional infinite-horizon problem and show that pointwise bounds on the solution to the approximating fixed-point problems (\ref{equation:infinite-horizon-saa-value-function}) can be derived from problem primitives. This makes it possible to specify a set $\mathcal{V}\subset\lscfcns(\mathcal{X})$ containing the approximating solutions for which we check that Theorem~\ref{theorem:infinite-horizon-consistency} applies. Next, in Subsection~\ref{subsection:inventory-control-inconsistency}, we consider an alternate version of the previous problem for which Theorem~\ref{theorem:infinite-horizon-consistency} does not apply. We show that the resulting approximating decision rules fail to converge to an optimal policy for the true control problem. Lastly, in Subsection~\ref{subsection:inventory-control-autoregressive-randomness} we approximate an infinite-horizon problem involving an autoregressive random variable, for which we adapt the approach taken in Subsection~\ref{subsection:inventory-control-example}.

Throughout this section we work with saddle functions. For two convex sets $\mathcal{Z}_1$ and $\mathcal{Z}_2$, a \emph{saddle function} on $\mathcal{Z}_1\times\mathcal{Z}_2$ is a real-valued function which is convex in its first argument and concave in its second argument, i.e., $f:\mathcal{Z}_1\times\mathcal{Z}_2\to\reals$ such that $z_1 \mapsto f(z_1,z_2)$ is convex for each $z_2\in\mathcal{Z}_2$ and $z_2\mapsto f(z_1,z_2)$ is concave for each $z_1\in\mathcal{Z}_1$.

\subsection{Revenue Optimization With Stochastic Prices}
\label{subsection:inventory-control-example}
For initial state $x^1\in\reals_+$, consider the infinite-horizon stochastic optimal control problem
\begin{alignat}{3}\label{problem:revenue-optimization}
& \kern0.04em \minimize_{y^1,y^2, \ldots } && \quad \Expt_{\Prob^{\infty}}\Biggl[\, \sum_{t=1}^{\infty}\beta^{t-1}\bigl(C(x^{t+1})-\rp^t(x^t-x^{t+1})\bigr) \Biggr]&&\quad\tag{ROSP}\\
& \subjectTo &&
\quad x^{t+1} = y(x^t,\rp^t)\in[0,x^t] \quad \forall t\in \nats, \notag
\end{alignat}
which is studied in \parencite{Keehan-et-al-MPC}. Here $x^t\in\reals_+$ is an inventory of some product, $\rp^t \sim \Prob$ is a random variable supported on $\reals_+$ representing the product's per-unit market price, and $C:\reals_+\to \reals_+$ is a continuous convex function giving the cost of storing inventory between stages. The decision to be made is for what inventory level to sell down to at the current realization of the price. 

The associated fixed-point problem is to find ${V}:\reals_+\to{\reals}$ such that
\begin{equation*}
    {V}(x)=\Expt_{\Prob}\biggl[\,\inf_{0\leq y\leq x}\Bigl\{C(y)-\rp(x-y)+\beta {V}(y)\Bigr\}\biggr] \quad \forall x\in\reals_+.
\end{equation*}
With $\Prob$ supported on all of $\reals_+$, the stage-cost function is unbounded. To approximate this problem, let $\rp_1,\rp_2,\ldots$ be random variables which are independent and identically distributed according to $\Prob$, and suppose that $\{p_1,p_2,\ldots\}$ is an outcome of this random sequence. We set $\Prob_{\nu} = \frac{1}{\nu}\sum_{i=1}^{\nu}\pmProb_{{p}_i}$ for each $\nu\in\nats$. 
Recall $\Prob_{\nu} \weaklyto \Prob$ for almost every such outcome $\{{p}_1,{p}_2,\ldots\}$. The approximating fixed-point problem is to find  $\ExptCtgF_{\nu}:\reals_+\to{\reals}$ such that
\begin{equation*}
    \ExptCtgF_{\nu}(x)=\Expt_{\Prob_{\nu}}\biggl[\,\inf_{0\leq y \leq x}\Bigl\{C(y)-\rp(x-y)+\beta \ExptCtgF_{\nu}(y)\Bigr\}\biggr] \quad \forall x\in\reals_+.
\end{equation*}
Since $\Prob_{\nu}$ has compact support and future states are constrained to a compact set, an argument utilizing \parencite[Theorems~9.6 and 9.8]{stokey-lucas:recursive-methods} shows that a unique continuous and convex solution function $\ExptCtgF_{\nu}$ satisfying the principle of optimality relative to the underlying approximation of (\ref{problem:revenue-optimization}) exists. 

In order to construct a set $\mathcal{V}\subset\lscfcns(\reals_+)$ for which the conditions of Theorem~\ref{theorem:infinite-horizon-consistency} hold, we derive the following pointwise bound on the approximating fixed-point functions $V_{\nu}$.
\begin{proposition}\label{proposition:revenue-optimization-value-function-bounds}
In the context of this subsection, suppose that $\Expt_{\Prob}[\rxi]$ is finite. Then, for each $\nu\in\nats$ it holds that
\begin{equation*}
    \inf_{\kappa\in\nats}-(1-\beta)^{-1} \Expt_{\Prob_{\kappa}}[\rp]x \leq \ExptCtgF_{\nu}(x) \leq (1-\beta)^{-1} C(x) \quad \forall x\in\reals_+,
\end{equation*}
and these bounds are finite valued for almost every outcome $\{{p}_1,{p}_2,\ldots\}$.
\end{proposition}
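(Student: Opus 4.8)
The plan is to bracket the fixed point $V_\nu$ between an explicit super- and subsolution of the approximating Bellman operator $B_\nu$, exploiting that $B_\nu$ is order preserving: if $f \le g$ pointwise then, since $\beta > 0$, the integrand $C(y) - \tilde p(x-y) + \beta f(y)$ lies below $C(y) - \tilde p(x-y) + \beta g(y)$ for every admissible $y$, so $B_\nu(f) \le B_\nu(g)$. Because $\Prb_\nu$ has finite support and, from any state $x$, every feasible trajectory stays in the compact set $[0,x]$, the iterates $B_\nu^n$ form a $\beta$-contraction in the sup-norm on each $[0,X]$ and converge to the unique solution $V_\nu$ guaranteed by the Stokey--Lucas argument already invoked. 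Consequently a supersolution $U$ (one with $B_\nu(U) \le U$) dominates $V_\nu$, since $\{B_\nu^n(U)\}$ is nonincreasing with limit $V_\nu$; symmetrically a subsolution $L$ (one with $B_\nu(L) \ge L$) is dominated by $V_\nu$.

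For the upper bound I would take $U(x) \defeq (1-\beta)^{-1}C(x)$, which is continuous and convex, hence lsc. Using the identity $1 + \beta(1-\beta)^{-1} = (1-\beta)^{-1}$ and evaluating the inner infimum at the feasible point $y = x$ gives
\begin{equation*}
B_\nu(U)(x) = \E_{\Prb_\nu}\Bigl[\inf_{0 \le y \le x}\bigl\{(1-\beta)^{-1}C(y) - \tilde p(x - y)\bigr\}\Bigr] \le \E_{\Prb_\nu}\bigl[(1-\beta)^{-1}C(x)\bigr] = U(x),
\end{equation*}
so $U$ is a supersolution and $V_\nu \le (1-\beta)^{-1}C$.

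For the lower bound I would take the affine function $L_\nu(x) \defeq -(1-\beta)^{-1}\E_{\Prb_\nu}[\tilde p]\,x$. Writing $m_\nu \defeq \E_{\Prb_\nu}[\tilde p] \ge 0$ and discarding the nonnegative term $C(y) \ge 0$, then using that $\tilde p \ge 0$ and $0 \le y \le x$ force $(\tilde p - \beta(1-\beta)^{-1}m_\nu)y \ge -\beta(1-\beta)^{-1}m_\nu x$, I obtain
\begin{equation*}
B_\nu(L_\nu)(x) \ge \E_{\Prb_\nu}\Bigl[\inf_{0 \le y \le x}\bigl\{-\tilde p(x - y) - \beta(1-\beta)^{-1}m_\nu y\bigr\}\Bigr] \ge -m_\nu x - \beta(1-\beta)^{-1}m_\nu x = L_\nu(x),
\end{equation*}
so $L_\nu$ is a subsolution and $V_\nu \ge L_\nu$. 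Since $L_\nu$ is one term of the infimum, $V_\nu(x) \ge L_\nu(x) \ge \inf_{\kappa\in\N} -(1-\beta)^{-1}\E_{\Prb_\kappa}[\tilde p]\,x$, which together with the upper bound yields the claimed sandwich, uniformly in $\nu$.

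Finiteness is immediate for the upper bound since $C$ is real valued. For the lower bound, note that $\inf_{\kappa} -(1-\beta)^{-1}\E_{\Prb_\kappa}[\tilde p]\,x = -(1-\beta)^{-1}x\,\sup_\kappa m_\kappa$ for $x \ge 0$; by the SLLN $m_\kappa \to \E_\Prb[\tilde p]$ for almost every outcome, and the assumed finiteness of $\E_\Prb[\tilde p]$ makes $\{m_\kappa\}$ convergent, hence bounded, so $\sup_\kappa m_\kappa < \infty$ almost surely. I expect the main obstacle to be the rigorous justification of the comparison step: because $C$ may be unbounded on $\R_+$, one cannot work in a single sup-normed space globally, and the cleanest fix is to run the contraction/value-iteration argument on each compact $[0,X]$ --- where feasibility keeps the state --- and then let $X \to \infty$. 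Alternatively, both inequalities can be read off directly from the principle of optimality established for $V_\nu$: the hold-forever policy $y \equiv x$ yields discounted cost exactly $(1-\beta)^{-1}C(x)$, while $C \ge 0$, $x^{t+1} \ge 0$, $x^t \le x$, and $\tilde p^t \ge 0$ bound any policy's cost below by $-(1-\beta)^{-1}\E_{\Prb_\nu}[\tilde p]\,x$, the interchange of sum and expectation being licensed by the $\beta$-discounting and the boundedness of the finitely supported prices.
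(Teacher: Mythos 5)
Your proposal is correct, but your primary route is genuinely different from the paper's. The paper argues at the level of the sequential problem: since $V_{\nu}$ satisfies the principle of optimality relative to the approximating control problem induced by $\Prb_{\nu}$, it suffices to bound the discounted cost of policies directly --- every admissible policy has cost at least $-(1-\beta)^{-1}\E_{\Prb_{\nu}}[\tilde p]\,x^1$ (drop $C \geq 0$, use $x^t - x^{t+1} \leq x^1$, and interchange sum and expectation via dominated convergence, which applies because $\Prb_{\nu}$ has finite support) and at most $(1-\beta)^{-1}C(x^1)$ --- and then pass to the infimum over policies. This is precisely the ``alternative'' you sketch in your closing sentences. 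Your main argument instead stays entirely at the fixed-point level: you verify that $(1-\beta)^{-1}C$ is a supersolution and $x \mapsto -(1-\beta)^{-1}\E_{\Prb_{\nu}}[\tilde p]\,x$ a subsolution of $B_{\nu}$ (both computations check out, including the case split hidden in your inequality $(\tilde p - \beta(1-\beta)^{-1}m_{\nu})y \geq -\beta(1-\beta)^{-1}m_{\nu}x$ on $[0,x]$), and squeeze $V_{\nu}$ between them using monotonicity of $B_{\nu}$ together with value-iteration convergence, localised to $[0,X]$ to evade the unboundedness of $C$; the localisation is legitimate because feasibility traps every trajectory in $[0,x]$, so the restricted operator is well defined, is a sup-norm $\beta$-contraction there, and has the restriction of $V_{\nu}$ as its unique bounded fixed point. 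What each approach buys: yours never invokes the underlying infinite-horizon problem, only the fixed-point equation and the monotone/contractive structure, which makes it more self-contained but costs you the extra localisation machinery; the paper's is shorter because it leans on the principle of optimality already established via Stokey--Lucas. One small point in your favour: the paper's displayed upper-bound chain asserts $C(x^{t+1}) - \tilde p^{\,t}(x^t - x^{t+1}) \leq C(x^1)$ along \emph{every} policy, which implicitly uses monotonicity of $C$, whereas your supersolution check (equivalently, costing the hold policy $x^{t+1} = x^t$) needs no such property. The finiteness argument --- SLLN gives $\E_{\Prb_{\kappa}}[\tilde p] \to \E_{\Prb}[\tilde p]$ almost surely, and a convergent sequence is bounded --- is identical in both.
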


\begin{proof}
For a given $\nu\in\nats$, consider the underlying approximation of (\ref{problem:revenue-optimization}) induced by $\Prob_{\nu}$:
\begin{alignat}{3}
& \kern0.04em \minimize_{y^1,y^2,\ldots } && \quad \Expt_{\Prob_{\nu}^{\infty}}\Biggl[\, \sum_{t=1}^{\infty}\beta^{t-1}\bigl(C(x^{t+1})-\rp^t(x^t-x^{t+1})\bigr) \Biggr]&&\quad\label{problem:approximating-revenue-optimization}\\
& \subjectTo &&
\quad x^{t+1} = y(x^t,\rp^t)\in[0,x^t] \quad \forall t\in \nats. \notag&& 
\end{alignat}
For each admissible control policy $y^1,y^2,\ldots\,$, it holds that
\begin{align*}
& \Expt_{\Prob_{\nu}^\infty}\Biggl[\, \sum_{t=1}^{\infty }\beta ^{t-1}\bigl( C(x^{t+1}) -\rp^{t}(x^t-x^{t+1})\bigr)\Biggr] \quad \where~x^{t+1} = y^t(x^t,\rp^t)\;\forall t\in \nats\\
\geq{}& \Expt_{\Prob_{\nu}^\infty}\Biggl[\, \sum_{t=1}^{\infty }\beta ^{t-1}\bigl( -\rp^{t}(x^1)\bigr) \Biggr] \\ 
={}& -(1-\beta)^{-1} \Expt_{\Prob_{\nu}}[\rp]x^1,
\end{align*}
the equality following from Lebesgue's dominated convergence theorem which applies since $\Prob_{\nu}$ is supported at a finite number of discrete points. Similarly,
\begin{align*}
& \Expt_{\Prob_{\nu}^\infty}\Biggl[\, \sum_{t=1}^{\infty }\beta ^{t-1}\bigl( C(x^{t+1}) -\rp^{t}(x^t-x^{t+1})\bigr) \Biggr] \quad \where~x^{t+1} = y^t(x^t,\rp^t)\;\forall t\in \nats\\
\leq{}& \Expt_{\Prob_{\nu}^\infty}\Biggl[\, \sum_{t=1}^{\infty }\beta ^{t-1} C(x^1) \Biggr] \\ 
={}& (1-\beta)^{-1} C(x^1).
\end{align*}
Since $V_{\nu}$ satisfies the principle of optimality relative to (\ref{problem:approximating-revenue-optimization}), we have
\begin{equation*}
    \inf_{\kappa\in\nats}-(1-\beta)^{-1} \Expt_{\Prob_{\kappa}}[\rp]x \leq \ExptCtgF_{\nu}(x) \leq (1-\beta)^{-1} C(x) \quad \forall x\in\reals_+.
\end{equation*}
With $\Expt_{\Prob}[\rp]$ finite, the SLLN applies to $\Expt_{\Prob_{\kappa}}[\rp]$, and thus $\Expt_{\Prob_{\kappa}}[\rp] \to \Expt_{\Prob}[\rp]$ for almost every outcome $\{{p}_1,{p}_2,\ldots\}$. Recalling that the infimum of a sequence of real numbers is finite if the sequence is convergent, 
it follows that for each $x\in\reals_+$ the value
\begin{equation*}
\inf_{\kappa\in\nats}-(1-\beta)^{-1} \Expt_{\Prob_{\kappa}}[\rp]x
\end{equation*}
is finite for almost every outcome $\{{p}_1,{p}_2,\ldots\}$. Also, $(1-\beta)^{-1}C(x)$ is real valued for each $x\in\reals_+$ since $\beta\in(0,1)$.
\end{proof}

In order to apply Theorem~\ref{theorem:infinite-horizon-consistency} we now construct a set ${\mathcal{V}\subset\lscfcns(\reals_+)}$ that contains the approximating fixed-point functions. We assume that $\Expt_{\Prob}[\rxi]$ is finite and use the pointwise bounds provided by Proposition~\ref{proposition:revenue-optimization-value-function-bounds}. Let
\begin{gather*}
    \mathcal{V} = \Bigl\{V\in\lscfcns(\reals_+) : V~\text{is a convex function} \Bigr\} \\
    \cap \Bigl\{V \in\lscfcns(\reals_+) : \inf_{\kappa\in\nats}-(1-\beta)^{-1}\Expt_{\Prob_{\kappa}}[\rp] x \leq V(x)  \leq (1-\beta)^{-1} C(x)\quad \forall x \in\reals_+ \Bigr\}
\end{gather*}
which contains the approximating fixed-point functions. As an intersection of closed sets, $\mathcal{V}$ is closed under $\awDistance$ \parencite[Propositions~4.1(i) and 4.5]{royset-wets-constrained-m-estimators}. Furthermore, since each $V\in\mathcal{V}$ satisfies $V(0)=0$, the point $z_{\ctr} = (0,0)$ is common to the epigraphs of every function in $\mathcal{V}$. It follows that $\mathcal{V}$ is bounded \parencite[Proposition~3.1]{royset-approximations-2018} and therefore compact under $\awDistance$ \parencite[Proposition~4.45, Theorem~7.58]{rockafellar-wets-variational-analysis}.

We now check that Theorem~\ref{theorem:infinite-horizon-consistency} applies to $\mathcal{V}$. Consider Assumption~\ref{assumption:infinite-horizon-regularity} and the equi-lsc of epi-convergent sequences: For a given $V\in\mathcal{V}$ and $(x,p)\in\reals_+\times\reals_+$, the value 
\begin{equation*}
b(V)(x,p) = {\inf_{0 \leq y \leq x}\Bigl\{C(y)-p(x-y) + \beta V(y)\Bigr\}}
\end{equation*}
is finite, satisfying
\begin{equation}
    \inf_{\kappa\in\nats}-\bigl(p +\beta(1-\beta)^{-1} \Expt_{\Prob_{\kappa}}[\rp]\bigr) x \leq b(V)(x,p)\leq (1-\beta)^{-1} C(x).
    \label{equation:inventory-control-pw-bounds}
\end{equation}
Moreover, the mapping $p \mapsto b(V)(x,p)$ is concave. Hence, $b(V)(x,\blank)$ is continuous and therefore measurable. Since the left- and right-hand side functions in (\ref{equation:inventory-control-pw-bounds}) are integrable, we deduce that $b(V)(x,\blank)$ is integrable as well. This shows that the operators ${\{B,B_{\nu};\, \nu\in\nats\}}$ are well defined, and from (\ref{equation:inventory-control-pw-bounds}) it can also be seen that they map $\mathcal{V}$ into itself. The equi-lsc of each epi-convergent sequence in $\mathcal{V}$ follows from convexity \parencite[Proposition~3.2(ii)]{royset-wets-constrained-m-estimators} and (\ref{equation:inventory-control-pw-bounds}). 

Now consider the conditions~(\ref{theorem:infinite-horizon-consistency-i}) and (\ref{theorem:infinite-horizon-consistency-ii}) of Theorem~\ref{theorem:infinite-horizon-consistency}: The asymptotic semi-integrability statements (\ref{theorem:infinite-horizon-consistency:equation:assymptotic-inf-integrable}) and (\ref{theorem:infinite-horizon-consistency:equation:assymptotic-sup-integrable}) follow from the left- and right-hand functions in (\ref{equation:inventory-control-pw-bounds}) being continuous and integrable. The equi-semicontinuity statements follow from $b(V)$ being a saddle function and (\ref{equation:inventory-control-pw-bounds}); see \parencite[Example~9.14]{rockafellar-wets-variational-analysis} and \parencite[Proposition~3.2(v)]{royset-wets-constrained-m-estimators}.

We thus conclude that Theorem~\ref{theorem:infinite-horizon-consistency} and Proposition~\ref{proposition:consequences-of-non-uniqueness}(\ref{proposition:consequences-of-non-uniqueness-i}) apply for almost every outcome $\{p_1,p_2,\ldots\}$. Also, if $V\in\mathcal{V}$ is a fixed point of $B$, the principle of optimality can be verified using the pointwise bounds on each of the functions in $\mathcal{V}$; see \parencite[Theorem~9.2]{stokey-lucas:recursive-methods}.

\subsection{Failure to Converge to an Optimal Policy}
\label{subsection:inventory-control-inconsistency}
We now study an example where the approximating decision rules do not converge to an optimal policy for the true control problem. Consider the problem (\ref{problem:revenue-optimization}) from Subsection~\ref{subsection:inventory-control-example}, but with $\Prob$ being the $\text{L{\'e}vy}(0,1)$ distribution having location parameter $0$ and scale parameter $1$; this is supported on all of $\reals_+$ and has a heavy right-hand tail. In fact, $\Prob$ is not integrable and the reasoning of Subsection~\ref{subsection:inventory-control-example} does not apply. 

Suppose $C(0)=0$, and take the policy that sells all inventory immediately regardless of the price, so that $y^1(x,p) = 0$ for all $(x,p)\in\reals_+\times\reals_+$. When $x^1 > 0$, such a policy is trivially optimal for (\ref{problem:revenue-optimization}), as
\begin{align*}
&  \Expt_{\Prob^\infty}\Biggl[\,\sum_{t=1}^{\infty }\beta ^{t-1}\bigl( C(x^{t+1}) -\rp^{t}(x^t-x^{t+1})\bigr) \Biggr] \quad \where~x^{t+1} = 0\;\forall t\in \nats\\
={}& \Expt_{\Prob}\bigl[-\rp^{1}x^{1}\bigr]\\ 
={}& -\infty.
\end{align*}

Following the scheme used in Subsection~\ref{subsection:inventory-control-example}, we solve the approximating fixed-point problems numerically using \texttt{SDDP.jl} \parencite{dowson_sddp.jl, dowson-policy-graph} for random outcomes of $\{p_1,p_2,\ldots\}$. Setting $\beta = 0.99$ and $C(x) = \frac{1}{2}x^2$, Figure~\ref{figure:inconsistency} graphs $y_{\nu}(1,1)$ as a function of ${\nu}$, in which $y_{\nu}(1,1) \to 1$ as $\nu\to\infty$. 
\begin{figure}[H]
    \centering
    \hspace{-1.2cm}\includegraphics[width=281pt]{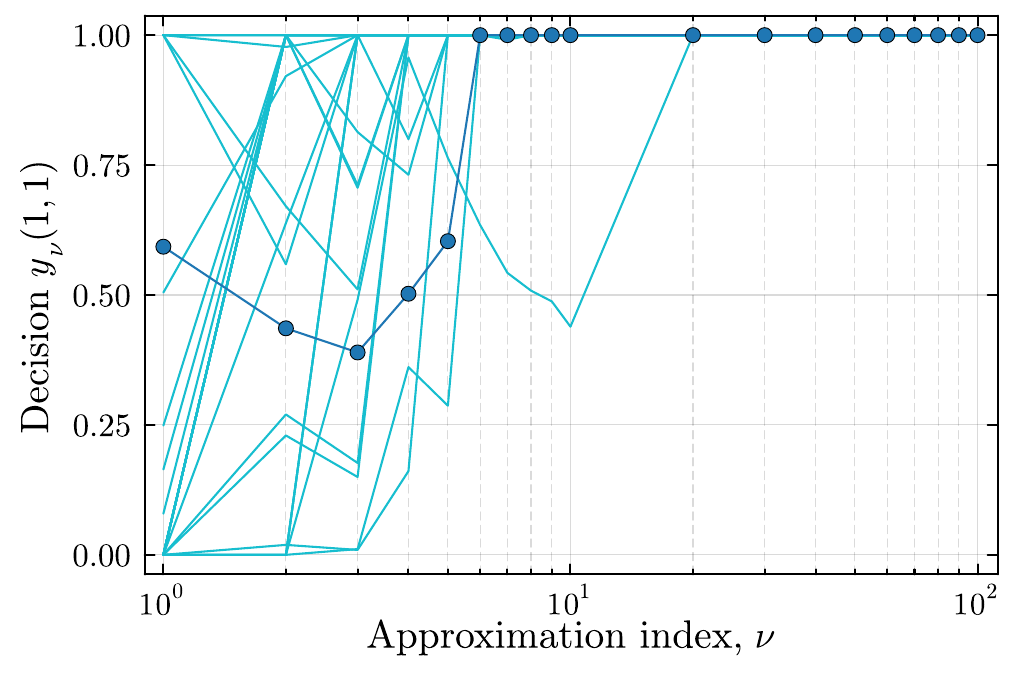}
    \caption{\textbf{Asymptotic inconsistency.} Decision for the next state $y_{\nu}(1,1)$ as a function of the number of samples $\nu$ when the true distribution is $\text{L{\'e}vy}(0,1)$. One outcome highlighted.}
    \label{figure:inconsistency}
\end{figure}

\noindent It can further be shown that $y_{\nu}(x,p) \to x$ for each $(x,p)\in\reals_+\times\reals_+$. The resulting policy is clearly suboptimal. Implementing it in (\ref{problem:revenue-optimization}) yields the objective value
\begin{align*}
& \Expt_{\Prob^\infty}\Biggl[ \,\sum_{t=1}^{\infty }\beta ^{t-1}\bigl( C(x^{t+1}) -\rp^{t}(x^t-x^{t+1})\bigr) \Biggr] \quad \where~x^{t+1} = x^t \;\forall t\in \nats\\
={}& (1-\beta)^{-1}C(x^{1})\\ 
>{}& -\infty.
\end{align*}

The failure of the approximating policy to converge to an optimal policy does not relate to a failure of the SLLN because $\Expt_{\Prob_{\nu}}[\rp] \to \Expt_{\Prob}[\rp] = \infty$ almost surely; see \parencite[Theorem~22.1]{billingsley1995probability} and its corollary. The first issue arises when attempting to construct a subset to contain the approximating fixed-point functions; the function previously used in Subsection~\ref{subsection:inventory-control-example} to specify a pointwise lower bound is not finite which does not enforce the required equi-semicontinuity properties. The second issue arises when attempting to verify the principle of optimality; even though it is satisfied for each $\nu\in\nats$ relative to an underlying approximation of the true control problem, it does not apply asymptotically as $\nu\to\infty$. 

\subsection{Stagewise-Dependent Random Variables}
\label{subsection:inventory-control-autoregressive-randomness}
Here we extend the control problem in Subsection~\ref{subsection:inventory-control-example} to model stagewise dependence of the random price by expanding the state space. We suppose that the natural logarithm of the random price follows an autoregressive process: for current log price $\ell\in\reals$ the log price in the next stage is the random variable $\alpha \ell + \rxi$, where $\alpha \in (0,1)$ and $\rxi$ is distributed according to $\Prob$ which is supported on $\reals$. In the next stage the previous log price $\ell$ becomes an additional state variable while the random variable $\rxi$ remains stagewise independent, as in the formulation (\ref{problem:sum}). The resulting fixed-point problem is to find ${V}:\reals_+\times\reals\to\reals$ such that
\begin{equation*}
    V(x,\ell)=\Expt_{\Prob}\left[\,\inf_{\substack{0\leq y\leq x\\\eta =\alpha\ell+\rxi}}\Bigl\{C(y)-\exp(\eta)(x-y)+\beta {V}(y,\eta)\Bigr\}\right] \quad \forall (x,\ell) \in \reals_+\times\reals.
\end{equation*}
The possible realizations of $\alpha\ell+\rxi$ cannot be said to lie only in some compact subset of $\reals$ when $\Prob$ has support on all of $\reals$, so we now have an unbounded state variable and hence stage-cost function. 

We do not assume that the constant $\alpha$ is available a priori and estimate it simultaneously to our approximation. Given a sequence of constants $\{\alpha_{\nu}\in(0,1)\}_{\nu\in\nats}$ with $\alpha_{\nu}\to\alpha$ and a sequence of probability distributions $\{\Prob_{\nu}\}_{\nu\in\nats}$ with $\Prob_{\nu}\weaklyto\Prob$, the approximating fixed-point problem is to find
$\ExptCtgF_{\nu}:\reals_+\times\reals\to\reals$ such that 
\begin{equation*}
    \ExptCtgF_{\nu}(x,\ell)=\Expt_{\Prob_{\nu}}\left[\,\inf_{\substack{0\leq y\leq x\\\eta =\alpha_{\nu}\ell+\rxi}}\Bigl\{C(y)-\exp(\eta)(x-y)+\beta \ExptCtgF_{\nu}(y,\eta)\Bigr\}\right]
    \quad \forall(x,\ell) \in \reals_+\times\reals.
\end{equation*}
Assuming $\Prob_{\nu}$ has compact support, future states are constrained to a compact set (consider the resulting sequences of future log prices when only the largest- or smallest-possible values of $\rxi$ under $\Prob_{\nu}$ is realized). An argument utilizing \parencite[Theorem~9.6]{stokey-lucas:recursive-methods} then shows that a unique continuous solution function $\ExptCtgF_{\nu}$ satisfying the principle of optimality relative to the underlying approximation of (\ref{problem:revenue-optimization}) exists. Similar reasoning to that in the proof of \parencite[Theorem~9.8]{stokey-lucas:recursive-methods} shows that this $\ExptCtgF_{\nu}$ is a saddle function. 

Note that in the definition of $V_{\nu}$ the constraint $\eta =\alpha_{\nu}\ell+\rxi$ varies with $\nu$. This is not explicitly permitted by the results previously developed in this paper. However, we can treat this example by viewing the constant $\alpha_{\nu}$ as a random variable that has the point-mass distribution $\pmProb_{\alpha_{\nu}}$. 
Since $\alpha_{\nu}\to\alpha$, it holds that $\pmProb_{\alpha_{\nu}}\weaklyto\pmProb_{\alpha}$, and with $\Prob_{\nu}\weaklyto\Prob$ as well, $\pmProb_{\alpha_{\nu}}\times\Prob_{\nu}\weaklyto\pmProb_{\alpha}\times\Prob$ \parencite[Theorem~2.8(ii)]{convergence-of-probability-measures:Billingsley}. So we are in the setting of Section~\ref{section:infinite-horizon-stochastic-dynamic-programming}. In what follows, under absolute-integrability and light-tailedness assumptions, we verify that the assumptions of Theorem~\ref{theorem:infinite-horizon-consistency} do indeed hold.

We start by discussing how $\alpha_{\nu}$ and $\Prob_{\nu}$ might arise in practice. Let $\rxi^1,\rxi^2,\ldots$ be random variables that are independent and identically distributed according to $\Prob$. Suppose that $\{\xi^1,\xi^2,\ldots\}$ is an outcome of this random sequence. We can view historical log prices $\ell^1,\ell^2,\ldots$ as being generated from the formula $\ell^{t+1} = \alpha\ell^t+\xi^t$ and use this to construct estimates of the unknown parameter $\alpha$. For instance, applying ordinary least squares to the first $\nu+1$ log prices yields
\begin{equation*}
\alpha_{\nu}\in\argmin_{\bar{\alpha},\bar{\mu}\in\reals} \sum_{t=1}^{\nu}\bigl(\ell^{t+1} - (\bar{\alpha}\ell^{t} + \bar{\mu})\bigr)^2.
\end{equation*}
The resulting $\alpha_{\nu}$ can then be used to construct $\Prob_{\nu}$ by computing $\xi^t_{\nu} = \ell^{t+1}-\alpha_{\nu}\ell^{t}$ for each $t \in[\nu]$ and setting $\Prob_{\nu} = \frac{1}{\nu} \sum_{i=1}^{\nu} \pmProb_{\xi^i_{\nu}}$. Assuming $\Expt_{\Prob}\bigl[\lvert\rxi\rvert\bigr]$ is finite, $\alpha_{\nu}\to\alpha$ for almost every outcome $\{{\xi}^1,{\xi}^2,\ldots\}$ \parencite[\S~3.3]{econometric-theory-and-methods:Davidson-MacKinnon}. (Alternatives to least squares apply here as well.) Due to the following result, this also implies that $\Prob_{\nu}\weaklyto\Prob$.

\begin{proposition}
In the context of this subsection, suppose that $\Expt_{\Prob}\bigl[\lvert\rxi\rvert\bigr]$ is finite. If $\alpha_{\nu} \to \alpha$ for almost every outcome  $\{{\xi}^1,{\xi}^2,\ldots\}$, then $\Prob_{\nu}\weaklyto\Prob$ as well. 
\end{proposition}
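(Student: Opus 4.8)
The plan is to compare $\Prb_{\nu}$ against the \emph{oracle} empirical measure built from the true innovations, namely $\hat{\Prb}_{\nu} \defeq \frac{1}{\nu}\sum_{i=1}^{\nu}\delta_{\xi^i}$, where $\xi^i$ is the actual outcome of $\tilde{\xi}^i$. Since $\xi^1,\xi^2,\ldots$ are i.i.d.\ according to $\Prb$, the same reasoning used in Subsection~\ref{subsection:inventory-control-example} (see \parencite[Problem~3.1]{convergence-of-probability-measures:Billingsley}) gives $\hat{\Prb}_{\nu}\weaklyto\Prb$ for almost every outcome. It therefore suffices to show that $\Prb_{\nu}$ and $\hat{\Prb}_{\nu}$ become asymptotically indistinguishable: weak convergence on $\R$ is metrized by, say, the bounded-Lipschitz metric $d_{\mathrm{BL}}$, so once $d_{\mathrm{BL}}(\Prb_{\nu},\hat{\Prb}_{\nu})\to 0$ the triangle inequality yields $d_{\mathrm{BL}}(\Prb_{\nu},\Prb)\to 0$, i.e.\ $\Prb_{\nu}\weaklyto\Prb$.

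To bound $d_{\mathrm{BL}}(\Prb_{\nu},\hat{\Prb}_{\nu})$, I would exploit that the residuals and true innovations differ by $\xi^i_{\nu}-\xi^i = (\alpha-\alpha_{\nu})\ell^i$, which follows directly from $\xi^i_{\nu} = \ell^{i+1}-\alpha_{\nu}\ell^i$ and $\xi^i = \ell^{i+1}-\alpha\ell^i$. For any $1$-Lipschitz test function $f$ with $\|f\|_{\infty}\leq 1$,
\begin{equation*}
\Bigl|\textstyle\frac{1}{\nu}\sum_{i=1}^{\nu}f(\xi^i_{\nu})-\frac{1}{\nu}\sum_{i=1}^{\nu}f(\xi^i)\Bigr| \leq \frac{1}{\nu}\sum_{i=1}^{\nu}|\xi^i_{\nu}-\xi^i| = |\alpha-\alpha_{\nu}|\,\frac{1}{\nu}\sum_{i=1}^{\nu}|\ell^i|,
\end{equation*}
so $d_{\mathrm{BL}}(\Prb_{\nu},\hat{\Prb}_{\nu}) \leq |\alpha-\alpha_{\nu}|\,\frac{1}{\nu}\sum_{i=1}^{\nu}|\ell^i|$. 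Since $\alpha_{\nu}\to\alpha$ by hypothesis, the whole estimate reduces to showing that the time-average $\frac{1}{\nu}\sum_{i=1}^{\nu}|\ell^i|$ stays bounded almost surely.

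This last control is the crux and the main obstacle. I would unfold the recursion as $\ell^i = \alpha^{i-1}\ell^1 + \sum_{k=1}^{i-1}\alpha^{i-1-k}\xi^k$, take absolute values, and average; after swapping the order of summation and bounding the inner geometric sum by $(1-\alpha)^{-1}$ (using $\alpha\in(0,1)$), one obtains
\begin{equation*}
\frac{1}{\nu}\sum_{i=1}^{\nu}|\ell^i| \leq \frac{|\ell^1|}{\nu(1-\alpha)} + \frac{1}{1-\alpha}\cdot\frac{1}{\nu}\sum_{k=1}^{\nu}|\xi^k|.
\end{equation*}
Because $\E_{\Prb}[|\tilde{\xi}|]$ is finite, the SLLN gives $\frac{1}{\nu}\sum_{k=1}^{\nu}|\xi^k|\to\E_{\Prb}[|\tilde{\xi}|]$ almost surely, whence $\limsup_{\nu}\frac{1}{\nu}\sum_{i=1}^{\nu}|\ell^i| \leq (1-\alpha)^{-1}\E_{\Prb}[|\tilde{\xi}|]<\infty$ almost surely. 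Combining this uniform bound with $\alpha_{\nu}\to\alpha$ yields $d_{\mathrm{BL}}(\Prb_{\nu},\hat{\Prb}_{\nu})\to 0$, and together with $\hat{\Prb}_{\nu}\weaklyto\Prb$ the triangle inequality finishes the argument. The geometric decay coming from $\alpha\in(0,1)$, together with the finite first moment, is precisely what tames the averaged absolute log prices; one should only take care that the estimator consistency $\alpha_{\nu}\to\alpha$, the SLLN on $|\xi^k|$, and the oracle empirical convergence all hold on a common almost-sure event, so that the conclusion holds for almost every outcome $\{\xi^1,\xi^2,\ldots\}$.
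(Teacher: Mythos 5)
Your proposal is correct and follows essentially the same route as the paper's proof: both compare the residual empirical measure to the oracle empirical measure $\frac{1}{\nu}\sum_{i=1}^{\nu}\delta_{\xi^i}$, exploit the identity $\xi^i_{\nu}-\xi^i=(\alpha-\alpha_{\nu})\ell^i$, and control $\frac{1}{\nu}\sum_{i=1}^{\nu}|\ell^i|$ by unfolding the autoregression, bounding the geometric sums by $(1-\alpha)^{-1}$, and invoking the SLLN. Your use of the bounded-Lipschitz metric with a uniform bound over the unit ball is merely a repackaged version of the paper's per-function portmanteau argument (which tests each bounded Lipschitz $f$ separately and concludes by transitivity of limits), so the two proofs are the same in substance.
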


\begin{proof}
Due to the SLLN,
\begin{align}\label{equation:mean-log-price}
\frac{1}{\nu} \sum_{i=1}^{\nu} \lvert\ell^i\rvert &= \frac{1}{\nu} \sum_{i=1}^{\nu} \bigl\lvert\alpha^{i-1}\ell^1 + \alpha^{i-2}{\xi}^1+\alpha^{i-1}{\xi}^2+\cdots+{\xi}^{i-1}\bigr\rvert \notag\\
&\leq \frac{1}{\nu} \sum_{i=1}^{\nu} \bigl(\alpha^{i-1}\lvert\ell^1\rvert + \alpha^{i-2}\lvert{\xi}^1\rvert+ \alpha^{i-1}\lvert{\xi}^2\rvert+\cdots+ \lvert{\xi}^{i-1}\rvert\bigr) \notag\\
&\to (1-\alpha)^{-1} \Expt_{\Prob}\bigl[\lvert\rxi\rvert\bigr]  \in \reals
\end{align}
for almost every outcome  $\{\xi^1,\xi^2,\ldots\}$. Using the portemanteau theorem \parencite[Theorem~13.16(ii)]{probability-theory:Klenke}, we show the following:
\begin{equation*}
\Expt_{\Prob_{\nu}}\bigl[f(\rxi)\bigr]=\frac{1}{\nu}\sum_{i=1}^{\nu}f({\xi}^i_{\nu}) \to \Expt_{\Prob}\bigl[f(\rxi)\bigr]~\text{for each bounded Lipschitz-continuous}~f:\Xi\to\reals,
\end{equation*}
for almost every outcome $\{{\xi}^1,{\xi}^2,\ldots\}$. Now, for each bounded $L$-Lipschitz-continuous $f:\Xi\to\reals$,
\begin{equation}\label{equation:lipschitz-sum}
 \Biggl\lvert\frac{1}{\nu}\sum_{i=1}^{\nu}f({\xi}^i) - \frac{1}{\nu}\sum_{i=1}^{\nu}f({\xi}^i_{\nu})\Biggr\rvert \leq \frac{1}{\nu}\sum_{i=1}^{\nu} \Bigl\lvert f({\xi}^i) - f({\xi}^i_{\nu})\Bigr\rvert \leq  \frac{1}{\nu} \sum_{i=1}^{\nu} L \bigl\lvert{\xi}^i-{\xi}_{\nu}^i\bigr\rvert = \frac{1}{\nu} \sum_{i=1}^{\nu} L  \bigl\lvert(\alpha - \alpha_{\nu})\ell^i \bigr\rvert.
\end{equation}
Since $\alpha_{\nu} \to \alpha$, using (\ref{equation:mean-log-price}) we have that $\frac{1}{\nu} \sum_{i=1}^{\nu} L\lvert(\alpha - \alpha_{\nu})\ell^i \rvert\to 0$, and we deduce from (\ref{equation:lipschitz-sum}) that $\frac{1}{\nu}\sum_{i=1}^{\nu}f({\xi}^i)$ and $\frac{1}{\nu}\sum_{i=1}^{\nu}f({\xi}^i_{\nu})$ have the same limit. With $f$ bounded and Lipschitz continuous, $\frac{1}{\nu}\sum_{i=1}^{\nu}f({\xi}^i) \to \Expt_{\Prob}\bigl[f(\rxi)\bigr]$  for almost every outcome $\{{\xi}^1,{\xi}^2,\ldots\}$. The result then follows from the transitivity of limits. 
\end{proof}

As for Proposition~\ref{proposition:revenue-optimization-value-function-bounds} in Subsection~\ref{subsection:inventory-control-example}, to construct a set $\mathcal{V}\subset\lscfcns(\reals_+\times\reals)$ for which the conditions of Theorem~\ref{theorem:infinite-horizon-consistency} hold, we derive the following pointwise bound on the approximating fixed-point functions $V_{\nu}$.
\begin{proposition}\label{proposition:log-AR1-revenue-optimization-value-function-bounds}
In the context of this subsection, suppose that $\Expt_{\Prob}\bigl[\lvert\rxi\rvert\bigr]$ and $\Expt_{\Prob}\bigl[\exp(\rxi)\bigr]$ are finite. Then, for each $\nu\in\nats$ it holds that
\begin{multline*}
    \inf_{\kappa\in\nats} \sum_{t=1}^{\infty }\beta ^{t-1}\Bigl(-\exp(\alpha_{\kappa}^{t}\ell) \tprod{s=0}{t-1}\Expt_{\Prob_{\kappa}}\bigl[\exp(\alpha_{\nu}^{s}\rxi) \bigr] x\Bigr) \leq \ExptCtgF_{\nu}(x,\ell) \leq (1-\beta)^{-1} C(x)\\ \quad \forall (x,\ell)\in\reals_+\times\reals,
\end{multline*}
and these bounds are finite valued for almost every outcome $\{{\xi}^1,{\xi}^2,\ldots\}$.
\end{proposition}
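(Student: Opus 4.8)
The plan is to follow the proof of Proposition~\ref{proposition:revenue-optimization-value-function-bounds}. Fix $\nu\in\N$ and work with the underlying approximation of (\ref{problem:revenue-optimization}) induced by $\Prb_{\nu}$ and $\alpha_{\nu}$; as noted earlier in the subsection its solution $\ECtgFunction_{\nu}$ satisfies the principle of optimality, so $\ECtgFunction_{\nu}(x,\ell\kern0.05em)$ is the optimal objective value of that control problem started from $(x,\ell\kern0.05em)$. It then suffices to bound the objective attained by admissible policies: a bound valid for \emph{every} policy gives a lower bound on $\ECtgFunction_{\nu}$, and the cost of \emph{some} policy gives an upper bound. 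The one genuinely new ingredient relative to Proposition~\ref{proposition:revenue-optimization-value-function-bounds} is the price dynamics: unrolling $\eta^{t}=\alpha_{\nu}\ell^{t}+\tilde{\xi}^{t}$ from $\ell^{1}=\ell$ gives $\eta^{t}=\alpha_{\nu}^{t}\ell+\sum_{s=1}^{t}\alpha_{\nu}^{t-s}\tilde{\xi}^{s}$, so by stagewise independence of $\{\tilde{\xi}^{s}\}$ under $\Prb_{\nu}$,
\[
\E_{\Prb_{\nu}^{\infty}}\bigl[\exp(\eta^{t})\bigr]=\exp(\alpha_{\nu}^{t}\ell\kern0.05em)\prod_{\tau=0}^{t-1}\E_{\Prb_{\nu}}\bigl[\exp(\alpha_{\nu}^{\tau}\tilde{\xi}\kern0.025em)\bigr].
\]

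For the lower bound I would use $C\ge 0$, the constraint-enforced monotonicity $0\le x^{t+1}\le x^{t}\le x^{1}=x$, and $\exp(\eta^{t})\ge 0$ to obtain the stagewise estimate $C(x^{t+1})-\exp(\eta^{t})(x^{t}-x^{t+1})\ge-\exp(\eta^{t})x$. Discounting, summing, applying $\E_{\Prb_{\nu}^{\infty}}$, and interchanging sum and expectation (by dominated convergence, as $\Prb_{\nu}$ has finite support) then bounds the objective of every admissible policy below by
\[
-\sum_{t=1}^{\infty}\beta^{t-1}\exp(\alpha_{\nu}^{t}\ell\kern0.05em)\prod_{\tau=0}^{t-1}\E_{\Prb_{\nu}}\bigl[\exp(\alpha_{\nu}^{\tau}\tilde{\xi}\kern0.025em)\bigr]\,x,
\]
which is the $\kappa=\nu$ term of the claimed infimum and hence dominates $\inf_{\kappa\in\N}$ of the analogous expressions; this yields the stated lower bound on $\ECtgFunction_{\nu}(x,\ell\kern0.05em)$. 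For the upper bound, since $C$ is convex, nonnegative, and vanishes at the origin it is nondecreasing on $\R_{+}$, so $C(x^{t+1})\le C(x)$; together with $-\exp(\eta^{t})(x^{t}-x^{t+1})\le 0$ this makes every admissible policy's objective at most $\sum_{t}\beta^{t-1}C(x)=(1-\beta)^{-1}C(x)$, giving $\ECtgFunction_{\nu}(x,\ell\kern0.05em)\le(1-\beta)^{-1}C(x)$.

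It remains to verify that both bounds are finite for almost every outcome. The upper bound is plainly finite. For each fixed $\kappa$ the measure $\Prb_{\kappa}$ is supported on finitely many points, so each factor $\E_{\Prb_{\kappa}}[\exp(\alpha_{\kappa}^{\tau}\tilde{\xi}\kern0.025em)]$ is finite and, bounding it by $\exp(\alpha_{\kappa}^{\tau}\max_{i}|\xi_{\kappa}^{i}|)$, the product is bounded in $t$ by $\exp\bigl(\max_{i}|\xi_{\kappa}^{i}|/(1-\alpha_{\kappa})\bigr)$; with $\exp(\alpha_{\kappa}^{t}\ell\kern0.05em)\le\exp(|\ell|)$ and $\beta<1$ the geometric tail makes each $\kappa$-term a finite real number. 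As in Proposition~\ref{proposition:revenue-optimization-value-function-bounds}, finiteness of the infimum then reduces to showing that the $\kappa$-sequence is bounded, which I would get by showing it converges to the true value $-\sum_{t}\beta^{t-1}\exp(\alpha^{t}\ell\kern0.05em)\prod_{\tau=0}^{t-1}\E_{\Prb}[\exp(\alpha^{\tau}\tilde{\xi}\kern0.025em)]\,x$; this value is finite because convexity of the cumulant generating function yields $\alpha^{\tau}\E_{\Prb}[\tilde{\xi}]\le\log\E_{\Prb}[\exp(\alpha^{\tau}\tilde{\xi}\kern0.025em)]\le\alpha^{\tau}\log\E_{\Prb}[\exp(\tilde{\xi}\kern0.025em)]$, so the log-product is absolutely summable under $\E_{\Prb}[|\tilde{\xi}|]<\infty$ and $\E_{\Prb}[\exp(\tilde{\xi}\kern0.025em)]<\infty$.

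The hard part will be justifying the convergence $\E_{\Prb_{\kappa}}[\exp(\alpha_{\kappa}^{\tau}\tilde{\xi}\kern0.025em)]\to\E_{\Prb}[\exp(\alpha^{\tau}\tilde{\xi}\kern0.025em)]$ that drives this passage to the limit: the empirical residuals $\xi_{\kappa}^{i}=\ell^{i+1}-\alpha_{\kappa}\ell^{i}$ depend on $\alpha_{\kappa}$, and $\exp$ is neither bounded nor Lipschitz, so Proposition~\ref{proposition:weak-convergence-of-autoregressive-empiricals} does not apply off the shelf. I would adapt its argument by writing $\exp(\xi_{\kappa}^{i})-\exp(\xi^{i})=\exp(\xi^{i})\bigl(\exp((\alpha-\alpha_{\kappa})\ell^{i})-1\bigr)$ and controlling $\tfrac{1}{\kappa}\sum_{i}\exp(\xi^{i})\bigl|\exp((\alpha-\alpha_{\kappa})\ell^{i})-1\bigr|\to 0$ via $\alpha_{\kappa}\to\alpha$, the averaged log-price control of (\ref{equation:mean-log-price}), and the light-tail assumption. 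It is precisely this residual $\alpha$-dependence inside an exponential moment, in place of the bounded Lipschitz test functions of Proposition~\ref{proposition:weak-convergence-of-autoregressive-empiricals}, that I expect to be the main obstacle.
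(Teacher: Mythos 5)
Your proposal matches the paper's proof in its first half. The paper likewise fixes $\nu$, invokes the principle of optimality for the underlying approximation, bounds the objective of every admissible policy below by $-\sum_{t}\beta^{t-1}\exp(\eta^{t})x$, unrolls the autoregression to $\eta^{t}=\alpha_{\kappa}^{t}\ell+\alpha_{\kappa}^{t-1}\tilde{\xi}^{1}+\cdots+\tilde{\xi}^{t}$, and uses stagewise independence plus a convergence theorem (the paper cites monotone convergence; your dominated-convergence argument on the finitely supported $\Prb_{\nu}$ is equally valid) to obtain the product $\exp(\alpha_{\kappa}^{t}\ell)\prod_{\tau=0}^{t-1}\E_{\Prb_{\kappa}}[\exp(\alpha_{\kappa}^{\tau}\tilde{\xi}\,)]$; your reading of the product with $\alpha_{\kappa}$ rather than the $\alpha_{\nu}$ appearing in the statement is also what the paper's own proof uses, and your explicit upper-bound argument (one policy suffices; $C$ convex, nonnegative, $C(0)=0$ implies nondecreasing) makes precise a step left implicit in Proposition~\ref{proposition:revenue-optimization-value-function-bounds}. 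Where you genuinely diverge is the finiteness of the lower bound. The paper never passes to the limit in $\kappa$: it applies the convexity estimates $\exp(\alpha_{\kappa}^{t}\ell)\leq\alpha_{\kappa}^{t}\exp(\ell)+1$ and $\E_{\Prb_{\kappa}}[\exp(\alpha_{\kappa}^{\tau}\tilde{\xi}\,)]\leq\alpha_{\kappa}^{\tau}\E_{\Prb_{\kappa}}[\exp(\tilde{\xi}\,)]+1$, bounds the infimum over $\kappa$ by the series of termwise $\sup_{\kappa}$ envelopes, and concludes convergence by the ratio test with limiting ratio $\beta\in(0,1)$. Your alternative --- show the $\kappa$-indexed sums converge to the true-distribution sum, mirroring Proposition~\ref{proposition:revenue-optimization-value-function-bounds} --- is workable but strictly harder: beyond termwise convergence $\E_{\Prb_{\kappa}}[\exp(\alpha_{\kappa}^{\tau}\tilde{\xi}\,)]\to\E_{\Prb}[\exp(\alpha^{\tau}\tilde{\xi}\,)]$ you must interchange $\lim_{\kappa}$ with $\sum_{t}$, which again requires a $\kappa$-uniform geometric tail bound, i.e.\ exactly the paper's sup-envelope. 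Moreover, both routes rest on the same crux you flag: almost-sure boundedness (paper) or convergence (you) over $\kappa$ of $\E_{\Prb_{\kappa}}[\exp(\tilde{\xi}\,)]=\frac{1}{\kappa}\sum_{i}\exp(\xi^{i})\exp((\alpha-\alpha_{\kappa})\ell^{i})$, an exponential empirical moment of the estimated residuals to which Proposition~\ref{proposition:weak-convergence-of-autoregressive-empiricals} indeed does not apply; the paper disposes of this with the unargued phrase ``it can further be shown.'' So your identification of the residual $\alpha$-dependence inside the exponential moment as the main obstacle, and your decomposition $\exp(\xi^{i}_{\kappa})=\exp(\xi^{i})\exp((\alpha-\alpha_{\kappa})\ell^{i})$, are on target --- with the practical remark that the paper's ratio-test route only needs boundedness rather than full convergence, which is the cheaper conclusion to extract from that decomposition.
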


\begin{proof}
For a given $\nu\in\nats$ and $(x,\ell)\in\reals_+\times\reals$, similar reasoning to that in the proof of Proposition~\ref{proposition:revenue-optimization-value-function-bounds} shows
\begin{align}
    \ExptCtgF_{\nu}(x,\ell) &\geq  \inf_{\kappa\in\nats} \Expt_{\Prob_{\kappa}^\infty}\Biggl[\, \sum_{t=1}^{\infty }\beta ^{t-1}\Bigl(-\exp\bigl(\alpha_{\kappa}^{t}\ell+ \alpha_{\kappa}^{t-1}\rxi^1 +  \alpha_{\kappa}^{t-2}\rxi^2 + \cdots + \rxi^{t}\bigr) x  \Bigr) \Biggr] \notag\\
    & =\inf_{\kappa\in\nats}\sum_{t=1}^{\infty }\beta ^{t-1}\Bigl( -\exp(\alpha_{\kappa}^{t}\ell) \tprod{s=0}{t-1}\Expt_{\Prob_{\kappa}}\bigl[\exp(\alpha_{\kappa}^{s}\rxi) \bigr] x\Bigr) \label{equation:AR1-log-price-bound}\\
    & \geq \inf_{\kappa\in\nats}\sum_{t=1}^{\infty }\beta ^{t-1}\Bigl(-\bigl(\alpha_{\kappa}^{t}\exp(\ell)+1\bigr)  \tprod{s=0}{t-1}\bigl(\alpha_{\kappa}^{s}\Expt_{\Prob_{\kappa}}\bigl[\exp(\rxi)\bigr]+1\bigr) x^1\Bigr) \notag\\
& \geq \sum_{t=1}^{\infty }  \beta ^{t-1}\Bigl(-\tsup{\kappa\in\nats} \bigl(\alpha_{\kappa}^{t}\exp(\ell)+1\bigr) \tprod{s=0}{t-1}\tsup{\kappa\in\nats}\bigl(\alpha_{\kappa}^{s}\Expt_{\Prob_{\kappa}}\bigl[ \exp(\rxi)\bigr]+1\bigr) x^1\Bigr) \label{equation:last-AR1-log-price-bound}, 
\end{align}
where the equality follows from the properties of the exponential function and the monotone convergence theorem. With $\Expt_{\Prob}\bigl[\lvert\rxi\rvert\bigr]$ and $\Expt_{\Prob}\bigl[\exp(\rxi)\bigr]$ finite, it can further be shown that (\ref{equation:last-AR1-log-price-bound}) is an infinite sum of real numbers for almost every outcome $\{{\xi}^1,{\xi}^2,\ldots\}$. 
Applying the ratio test to successive terms yields
\begin{align*}
    & \limsup_{t\to\infty} \frac
    {  \beta ^{t}\Bigl( -\sup_{\kappa\in\nats} \bigl(\alpha_{\kappa}^{t+1}\exp(\ell)+1\bigr) \prod_{s=0}^{t}\sup_{\kappa\in\nats}\bigl(\alpha_{\kappa}^{s}\Expt_{\Prob_{\kappa}}\bigl[\exp(\rxi)\bigr]+1\bigr) x^1\Bigr) } {  \beta ^{t-1}\Bigl( -\sup_{\kappa\in\nats} \bigl(\alpha_{\kappa}^{t}\exp(\ell)+1\bigr) \prod_{s=0}^{t-1}\sup_{\kappa\in\nats}\bigl(\alpha_{\kappa}^{s}\Expt_{\Prob_{\kappa}}\bigl[\exp(\rxi)\bigr]+1\bigr) x^1\Bigr) } \\
     ={}& \limsup_{t\to\infty} \beta\tsup{\kappa\in\nats}\bigl(\alpha_{\kappa}^{t}\Expt_{\Prob_{\kappa}}\bigl[\exp(\rxi)\bigr]+1\bigr) \\
    ={}& \beta \in (0,1),
\end{align*}
which shows that the sum is convergent and hence that (\ref{equation:AR1-log-price-bound}) is finite valued. Also, $(1-\beta)^{-1}C(x)$ is real valued since $\beta\in(0,1)$.
\end{proof}

\noindent Hence, assuming $\Expt_{\Prob}\bigl[\lvert\rxi\rvert\bigr]$ and $\Expt_{\Prob}\bigl[\exp(\rxi)\bigr]$ are finite, we set
\begin{equation*}
\mathcal\ExptCtgF_1 = \left\{ 
V \in \lscfcns(\reals_+ \times \reals) ~:~ 
\begin{aligned}
    & \ExptCtgF(x,\ell) \geq \inf_{\kappa \in \nats} \sum_{t=1}^{\infty} \beta^{t-1} \Bigl( -\exp(\alpha_\kappa^t \ell) 
      \tprod{s=0}{t-1}\Expt_{\Prob_\kappa}\bigl[\exp(\alpha_\nu^s \rxi) \bigr] x \Bigr) \\
    & \ExptCtgF(x,\ell) \leq (1 - \beta)^{-1} C(x) \kern 6.55em \forall (x, \ell) \in \reals_+ \times \reals
\end{aligned}
\right\},
\end{equation*}
which contains the approximating fixed-point functions. Compared to Subsection~\ref{subsection:inventory-control-example}, some further work is required to construct a subset of $\lscfcns(\reals_+\times\reals)$ satisfying the equi-semicontinuity requirements of Theorem~\ref{theorem:infinite-horizon-consistency}. We first show that the class of lsc saddle functions which are locally Lipschitz continuous with common moduli is closed under $\awDistance$.
\begin{proposition}\label{propostion:closed-classes-of-saddle-functions}
    Let $\mathcal{Z}_1\subset\reals^{r_1}$ and $\mathcal{Z}_2\subset\reals^{r_2}$ be closed convex sets. For saddle functions $\{f_{\nu};\, \nu\in\nats\}\subset\lscfcns(\mathcal{Z}_1\times\mathcal{Z}_2)$  which are locally Lipschitz continuous with common modulus $L(z_1,z_2) < \infty$ at each $(z_1,z_2)\in\mathcal{Z}_1\times\mathcal{Z}_2$, if $f_{\nu} \epito f \in \lscfcns(\mathcal{Z}_1\times\mathcal{Z}_2)$, then $f$ is a saddle function which is locally Lipschitz continuous with modulus $L(z_1,z_2)$ at each $(z_1,z_2)\in\mathcal{Z}_1\times\mathcal{Z}_2$.
\end{proposition}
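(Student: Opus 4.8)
The plan is to upgrade the given epi-convergence to pointwise convergence, and then to observe that convexity, concavity, and the Lipschitz modulus all pass to pointwise limits. The equi-Lipschitz hypothesis is precisely what makes this upgrade possible.

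First I would record that $\{f_{\nu}\}$ is equi-lsc. Each $f_{\nu}$ is real-valued and locally Lipschitz, hence continuous and in $\lscfcns(\mathcal{Z}_1\times\mathcal{Z}_2)$. At any $\bar{z}=(\bar{z}_1,\bar{z}_2)$ there is a neighbourhood on which every $f_{\nu}$ is $L(\bar{z}_1,\bar{z}_2)$-Lipschitz, so the downward oscillation of $f_{\nu}$ near $\bar{z}$ is bounded by $L(\bar{z}_1,\bar{z}_2)\delta$ uniformly in $\nu$; choosing $\delta$ small makes $\inf_{z\in\mathbb{B}_{\delta}(\bar{z})}f_{\nu}(z)\geq f_{\nu}(\bar{z})-\varepsilon$, which is exactly equi-lower-semicontinuity. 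Since $\{f_{\nu}\}$ is equi-lsc and $f_{\nu}\epito f$, the equivalence recalled in Section~\ref{section:preliminaries} (\parencite[Theorem~7.10]{rockafellar-wets-variational-analysis}) gives $f_{\nu}\pointwiseto f$.

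Next comes the structural transfer, which is routine once pointwise convergence is in hand. Fixing $z_2\in\mathcal{Z}_2$, each $z_1\mapsto f_{\nu}(z_1,z_2)$ is convex on the convex set $\mathcal{Z}_1$, and the convexity inequality $f_{\nu}(\lambda z_1+(1-\lambda)z_1',z_2)\leq \lambda f_{\nu}(z_1,z_2)+(1-\lambda)f_{\nu}(z_1',z_2)$ is stable under pointwise limits, so $f(\blank,z_2)$ is convex; the symmetric argument gives concavity of $f(z_1,\blank)$, so $f$ is convex-concave. For the modulus, on the neighbourhood of $\bar{z}$ where all $f_{\nu}$ are $L(\bar{z}_1,\bar{z}_2)$-Lipschitz, I would pass to the pointwise limit in $|f_{\nu}(z)-f_{\nu}(z')|\leq L(\bar{z}_1,\bar{z}_2)\|z-z'\|$ to obtain the identical bound for $f$; hence $f$ is locally Lipschitz with modulus $L(\bar{z}_1,\bar{z}_2)$ at $\bar{z}$.

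The one genuine subtlety — and the step I expect to need the most care — is finiteness, since the paper reserves ``saddle function'' for real-valued functions. The Lipschitz bound only controls oscillation, not absolute values, and indeed the constant functions $f_{\nu}\equiv -\nu$ are equi-Lipschitz saddle functions epi-converging to the improper limit $-\infty$; so some properness input is unavoidable. Granting that $f$ is proper (automatic in the applications, where every function obeys finite two-sided bounds), I would propagate finiteness: from $f_{\nu}(z)\leq f_{\nu}(\bar{z})+L(\bar{z}_1,\bar{z}_2)\|z-\bar{z}\|$ and the analogous lower bound, once $f$ is finite at one point the family $\{f_{\nu}\}$ stays uniformly bounded on a neighbourhood, making $\{z : f(z)\in\R\}$ both open and closed, hence all of the connected set $\mathcal{Z}_1\times\mathcal{Z}_2$. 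With $f$ real-valued, the convexity, concavity, and Lipschitz conclusions above combine to show that $f$ is a saddle function that is locally Lipschitz with modulus $L(z_1,z_2)$ at each $(z_1,z_2)$, as claimed.
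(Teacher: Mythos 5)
Your proposal is correct and follows essentially the same route as the paper's proof: derive equi-lower semicontinuity from the common local Lipschitz moduli, upgrade $f_{\nu}\epito f$ to $f_{\nu}\pointwiseto f$ via \parencite[Theorem~7.10]{rockafellar-wets-variational-analysis}, and pass the convexity/concavity inequalities and the Lipschitz bound to the pointwise limit. The differences are of two kinds. On bookkeeping: where you verify equi-lsc and the persistence of the modulus by direct estimates, the paper simply cites \parencite[Proposition~3.2(v)]{royset-wets-constrained-m-estimators} for the former and \parencite[Proposition~4.4]{royset-wets-constrained-m-estimators} for the latter; your hands-on arguments are sound substitutes. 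The genuine addition is your finiteness discussion, which the paper's proof omits entirely: it writes the convexity inequality as a limit of pointwise identities and then declares $f$ a saddle function, tacitly assuming the limit is real-valued. Your example $f_{\nu}\equiv -\nu$ shows this is not automatic --- the limit $f\equiv-\infty$ belongs to $\lscfcns(\mathcal{Z}_1\times\mathcal{Z}_2)$ as the paper defines that space, yet fails to be a saddle function in the paper's real-valued sense --- so the proposition as stated does need a properness proviso, and your open-and-closed argument (using that $\mathcal{Z}_1\times\mathcal{Z}_2$, being convex, is connected) is a correct way to propagate finiteness from a single point to the whole domain. In the paper's application the gap is harmless, since every function in $\mathcal{V}_1$ obeys finite two-sided pointwise bounds, but you are right to flag that the statement silently relies on this.
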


\begin{proof}
Since each function in the sequence $\{f_{\nu}\}_{\nu\in\nats}$ is locally Lipschitz continuous with common moduli, the sequence $\{f_{\nu}\}_{\nu\in\nats}$ is equi-lsc on $\mathcal{Z}_1\times\mathcal{Z}_2$ \parencite[Proposition~3.2(v)]{royset-wets-constrained-m-estimators} and $f_{\nu} \epito f$ implies $f_{\nu} \pointto f$ \parencite[Theorem~7.1]{rockafellar-wets-variational-analysis}. Thus, for every $z_2\in\mathcal{Z}_2$, $\lambda\in[0,1]$, and $z_1,z_1^{\prime}\in\mathcal{Z}_1$,
    \begin{align*}
        \lambda f(z_1,z_2) + (1-\lambda) f(z_1^{\prime},z_2) &= \lim_{{\nu\to \infty}} \lambda f_{\nu}(z_1,z_2) + \lim_{{\nu\to \infty}} (1-\lambda) f_{\nu}(z_1^{\prime},z_2)\\
        &\geq \lim_{\nu\to \infty} f_{\nu}(\lambda z_1 + (1-\lambda)z_1^{\prime},z_2)\\
        &= f(\lambda z_1 + (1-\lambda)z_1^{\prime},z_2),
    \end{align*}
    where the inequality follows from the fact that $z_1\mapsto f_{\nu}(z_1,z_2)$ is convex. A similar argument shows that for every $z_1\in\mathcal{Z}_1$, $\lambda\in[0,1]$, and $z_2,z_2^{\prime}\in\mathcal{Z}_2$ the reverse inequality 
    holds. Hence, $f$ is a saddle function on $\mathcal{Z}_1\times\mathcal{Z}_2$, and \parencite[Proposition~4.4]{royset-wets-constrained-m-estimators} shows at each $(z_1,z_2)\in\mathcal{Z}_1\times\mathcal{Z}_2$ the function $f$ is locally Lipschitz continuous with modulus $L(z_1,z_2)$. 
\end{proof}

For almost every outcome $\{\xi^1,\xi^2,\ldots\}$, saddle functions in $\mathcal{V}_1$ are bounded on any open ball in the interior of their domain via the continuous pointwise bounds on the functions in $\mathcal{V}_1$. It follows that each saddle function in $\mathcal{V}_1$ is locally Lipschitz continuous with common modulus function $L:\reals_+\times\reals\to\reals$ \parencite[Example~9.14]{rockafellar-wets-variational-analysis}. Let
\begin{equation*}
\mathcal{V}_2 = \Bigl\{ V \in\lscfcns(\reals_+\times\reals) : V~\text{is an}~L\text{-locally-Lipschitz saddle function}\Bigr\},
\end{equation*}
which contains each of the approximating fixed-point functions.  Proposition~\ref{propostion:closed-classes-of-saddle-functions} shows that $\mathcal{V}_2$ is closed under $\awDistance$.

We set $\mathcal{V} = \mathcal{V}_1 \cap \mathcal{V}_2$ in order to apply Theorem~\ref{theorem:infinite-horizon-consistency}. The set $\mathcal{V}$ is closed under $\awDistance$, and since each $V\in\mathcal{V}$ satisfies $V(0,0) = 0$, the point $z_{\ctr} = (0,0,0)$ shows that $\mathcal{V}$ is bounded and therefore compact under $\awDistance$. Assumption~\ref{assumption:infinite-horizon-regularity}, the equi-lsc of epi-convergent sequences, and the conditions~(\ref{theorem:infinite-horizon-consistency-i}) and (\ref{theorem:infinite-horizon-consistency-ii}) of Theorem~\ref{theorem:infinite-horizon-consistency}, can all be verified using similar reasoning as in Subsection~\ref{subsection:inventory-control-example}. 
We thus conclude that Theorem~\ref{theorem:infinite-horizon-consistency} and Proposition~\ref{proposition:consequences-of-non-uniqueness}(\ref{proposition:consequences-of-non-uniqueness-i}) apply for almost every outcome $\{{\xi}^1,{\xi}^2,\ldots\}$. Also, if $V\in\mathcal{V}$ is a fixed point of $B$, the principle of optimality can be verified using the pointwise bounds on the functions in $\mathcal{V}$; see \parencite[Theorem~9.2]{stokey-lucas:recursive-methods}.

\section*{Acknowledgements}
The authors would like to thank Andrew Philpott for helpful discussions and comments. The second author is supported in part by the Office of Naval Research under grants N00014-24-1-2492 and N00014-24-1-2318.

\printbibliography

\end{document}